\newtheorem{Theorem}{Theorem}
\newtheorem{Definition}{Definition}
\newtheorem{Proposition}[Theorem]{Proposition}
\newtheorem{Lemma}{Lemma}
\newtheorem{Corollary}[Theorem]{Corollary}
\newtheorem{Remark}{Remark}
\newcommand{\R}{\mathbb{R}}
\newcommand{\rint}{\operatorname{int}}
\DeclareMathOperator{\diag}{diag}
\newcommand{\bfo}{{\bf 1}}
\newcommand{\cH}{\mathcal{H}}
\newcommand{\cA}{\mathcal{A}}
\newcommand{\cS}{\mathcal{S}}
\newcommand{\cY}{\mathcal{Y}}
\newcommand{\supp}{\operatorname{supp}}
\title{Gossip over Holonomic Graphs}
\author{Xudong Chen\footnote{ECEE Department, CU Boulder, email: \texttt{xudong.chen@colorado.edu}.}, \quad Mohamed-Ali Belabbas\footnote{ECE Department and Coordinated Science Laboratory, email: \texttt{belabbas@illinois.edu}.}, \quad Ji Liu\footnote{ECE Department, Stony Brook University, email: \texttt{ji.liu@stonybrook.edu}.}}
\begin{document}

\date{}
\maketitle 

\begin{abstract}
    A gossip process is an iterative process in a multi-agent system  where only two neighboring agents communicate at each iteration and update their states. The neighboring condition is by convention described by an undirected graph. In this paper, we consider a general update rule whereby each agent  takes an arbitrary weighted average of its and its neighbor's current states. In general, the limit of the gossip process (if it converges) depends on the order of iterations of the gossiping pairs. The main contribution of the paper is to provide a necessary and sufficient condition for convergence of the gossip process that is independent of the order of iterations. This result relies on the introduction of the novel notion of holonomy of local stochastic matrices for the communication graph. We also provide complete characterizations of the limit and the space of holonomic stochastic matrices over the graph.
    \vspace{.3cm}

\noindent
{\bf Keywords:} Consensus; Gossiping; Markov Processes; Holonomy; Convergence of Matrix Products

\end{abstract}

\section{Introduction}

Consensus problems have a long history~\cite{degroot} and are closely related to Markov chains~\cite{seneta}.
Over the past decades, there has been considerable interest in developing algorithms intended to cause a group of $n>1$ agents to reach a consensus in a distributed manner, see~\cite{Ts3,vicsekmodel,reza1,luc,ReBe05,reachingp1,cutcdc,touri2014,decide,tacrate,xudong} just to cite a few. 
A simple idea to solve the consensus problem exploits a form of iterative message passing,  in which each agent  exchanges information with at most {\em one} other agent per iteration. One such exchange is called a {\em gossip}. Whenever two agents gossip, they set their state variables equal to the average of their values before gossiping~\cite{boyd052}. This process, which we term {\em standard gossip process}, is known to make all agents' values converge to the average of their initial states, provided that the neighbor graph is connected. 
In the standard gossip process, the update matrix associated with each iteration is {\em doubly} stochastic. 
Recall that all doubly stochastic matrices share the same left- (and right-) eigenvector, namely the vector $\bfo$ of all $1$'s, corresponding to eigenvalue $1$. Therefore, if the product of doubly stochastic matrices converge to a rank-one matrix, it can only converge to $\frac{1}{n}\bfo \bfo^\top$, where $n$ is the number of agents in the system.

In this paper, we enable convergence of a gossip process to an arbitrary {\em weighted} ensemble average. To this end, the update rule of the standard gossip process is generalized to allow neighboring agents to update their states according to a {\em weighted} average of their current values. 
We emphasize that when a pair of agents, say $i$ and $j$, communicate, they are not required to take the same weighted average. For example, agent $i$ can weigh its and $j$'s values by $1/3$ and $2/3$, whereas agent $j$'s weighs its and $i$'s values by $2/5$ and $3/5$.
We call this generalized version of a gossip process a {\em weighted gossip} process.

The extension of the update rule to (asymmetric) weighted average at each iteration gives rise to several important questions: (1) Can we still guarantee (exponential) convergence of the weighted gossip process? (2) Since different gossiping pairs can take different weighted averages and the corresponding stochastic matrices are {\em not} necessarily doubly stochastic, can we characterize the limit of a weighted gossip process (provided that the product of those stochastic matrices converges to a rank-one matrix)? (3) Furthermore, when can the limit of the product be {\em independent} of the order of appearance of the stochastic matrices in the product?  (4) Finally, in settings for which  questions (1)-(3) have positive answers,  can we design an update rule (or, equivalently, the set of stochastic matrices) to ensure convergence of the gossip process to {\em any} desired weighted ensemble average?

We address in the paper the above four questions and provide answers to them. To do so, we first introduce a novel notion of holonomy of stochastic matrices assigned to the edges of a given undirected graph that describes the neighbor topology (also referred to as communication topology) of the multi-agent system. 
We borrow this terminology from Riemannian geometry, which encodes transformations of vectors transported along close curves. 
In the present context, we take products of stochastic matrices along edges in arbitrary walks in the graph, and the term holonomy is used to indicate a change of a certain eigenvector corresponding to eigenvalue $1$ of the product along any closed walk in the graph. Holonomy in this context  is related to the so-called {\em graph balance} for signed (or, more generally, voltage) graphs~\cite{chen2017cluster}, as both concepts require the net effect of a contextually-defined transformation over a closed walk to be trivial. 

Based on the notion of holonomy, we establish a necessary and sufficient condition for a weighted gossip process to (exponentially) converge to a {\em unique} rank-one matrix, and characterize explicitly this limit. Note that any such limit can always be written as $\bfo p^\top$ where $p$ is a probability vector. 
The above facts imply the existence of a map $\pi$ that assigns a set of holonomic stochastic matrices to the probability vector $p$.

Moreover, we show that for an arbitrary probability vector $p$ in the interior of the standard simplex, there exists a set of holonomic stochastic matrices so that a corresponding weighted gossip process converges to $\bfo p^\top$, thus providing an affirmative answer to question (4) above.  
In other words, we show that the map $\pi$ is onto the interior of the standard simplex.
Another major contribution of the paper is to provide a complete characterization of the preimage $\pi^{-1}(p)$, i.e., all sets of holonomic stochastic matrices that are mapped to $p$ by $\pi$.

This paper shares the same spirit as the recent  work~\cite{BC2020triangulated}, in the sense that both consider convergence to arbitrary rank-one stochastic matrices, and both provide conditions for the limits of products of certain stochastic matrices to be independent of their orders of appearance in the products. However, the specific settings and analyses  differ significantly. In the present work,  the  stochastic matrices considered have a nontrivial $2\times 2$ principal submatrix, with the remaining part being an identity matrix,  reflecting the fact that communication at each iteration is pairwise. Stochastic matrices appearing in~\cite{BC2020triangulated} have in contrast a non-trivial $3\times 3$ principal submatrix, reflecting communications for three agents simultaneously. This seemingly minor extension in fact increases the complexity of the products drastically. As a trade-off for the reduced complexity of the products in the current paper, we do not impose any restriction on the structure of the graph (as long as it is connected); whereas in~\cite{BC2020triangulated}, only a special class of rigid graphs, termed triangulated Laman graphs, allowed us to draw conclusions similar to the ones of the current paper.

Our work answers questions about weighted gossip processes that have not been investigated in the extant literature. For a comparison with existing works, we describe a few recent results about the {\em standard} gossip process.  
For a deterministic standard gossip process, if each pair of neighboring agents gossip infinitely often, then all agents' states asymptotically converge to the average of their initial values~\cite{cutcdc}; if there exists a period $T$ such that each pair gossip at least once within each successive subsequence of length $T$, the converge will be reached exponentially fast~\cite{pieee}. Moreover, if the underlying graph is a tree and each neighboring pair is restricted to gossip only once per period, it is known~\cite{tacgossip} that the convergence rate is fixed and invariant over all possible periodic gossip sequences the graph allows. For a randomized standard gossip process, in which each pair of neighbor agents are randomly selected to gossip, all agents' states converge to the average of their initial values almost surely and in mean square~\cite{boyd052}. 
Finally, we emphasize that the terms ``gossip'' and ``weighted gossip'' have, over the years, had evolving meanings. We defined here a gossip process as being an iterative process in which interactions are between pairs of agents only. For some, a ``gossip'' process is moreover required to have agents converge to the average of their initial states~\cite{boyd052}; also, ``weighted gossip'' is used in~\cite{wgossip} to describe a variant of the push-sum algorithm~\cite{pushsum}, whose purpose is to reach an average consensus over directed graphs. These works thus differ from ours.

The remainder of the paper is organized as follows: We gather a few key notations and conventions at the end of the section. The notion of holonomy and the main results of the paper are presented in Section~\ref{sec:mainresult}. Analyses and proofs of the main results are provided in Section~\ref{sec:proofs}. The paper ends with a conclusion in Section~\ref{end}.

\vspace{.1cm}

\noindent
{\bf Notations and conventions.} 
We denote by $G= (V, E)$ an undirected graph, without multiple edges but, possibly, with self-loops. We call $G$ {\em simple} if it  has no self-loops.  The graphs we consider here are connected.
The node set is by convention denoted by $V = \{v_1,\ldots, v_n\}$ and the edge set by $E$. We refer to the edge linking nodes $v_i$ and $v_j$ as  $(v_i,v_j)$. A self-loop is then of the form $(v_i,v_i)$.   

Given a sequence of edges  $\gamma=e_1\cdots e_k$ in $E$, we say that a node $v\in V$ is {\em covered} by $\gamma$ if it is incident to an edge in $\gamma$. For $\gamma = e_1e_2\cdots$ an infinite sequence, we say that $v$ is covered infinitely often by $\gamma$ if there exists an infinite number of  sub-indices $i_1<i_2<\cdots$ such that $e_{i_j}$ is incident to $v$. 

Given a sequence $\gamma=e_1e_2\cdots$, we say that $\gamma'$ is a string of $\gamma$ if it is a contiguous subsequence, i.e., $\gamma' = e_ie_{i+1}\cdots e_\ell$ for some $i \ge 1$ and $\ell \ge i$.  
Let $\gamma = e_1\cdots e_k$ be a finite sequence and $e_{k+1}$ be an edge of $G$.  Denote by $\gamma \vee e_{k+1}$ the sequence $e_1\cdots e_k e_{k+1}$ obtained by adding $e_{k+1}$ to the end of $\gamma$.

For a given undirected graph $G$ as above, we denote by $\vec G = (V, \vec E)$ a {\em directed graph} on the same node set and with a ``bidirectionalized'' edge set; precisely, $\vec E$ is defined as follows: we assign to every edge $(v_i,v_j)$ of $G$, $i\neq j$,  two directed edges $v_iv_j$ and $v_jv_i$; to a self-loop $(v_i,v_i)$ of $G$ corresponds a self-loop $v_iv_i$ of $\vec G$.

Let $w = v_{i_1}\ldots v_{i_k}$ be a walk in the directed graph $\vec G$, i.e., every $v_{i_\ell}v_{i_{\ell + 1}}$, for $\ell =1,\ldots, k-1$, is an edge of $\vec G$. We call $v_{i_1}$ the starting-node and $v_{i_k}$ the ending-node of~$w$. We define by $w^{-1}:= v_{i_k} v_{i_{k-1}} \ldots v_{i_1}$ the {\em inverse} of $w$. 
Let $w' = v_{i_k}v_{i_{k+1}}\cdots v_{i_m}$ be another walk in $G$, where the starting-node of $w'$ is the same as the ending-node of $w$. We denote by $ww' = v_{i_1}\ldots v_{i_k}\ldots v_{i_m}$ the {\em concatenation} of the two walks.

A square nonnegative matrix is called a {\em stochastic matrix} if all its row-sums equal one. 
A matrix is {\em irreducible} if it is not similar via a permutation to a block upper triangular matrix (with strictly more than one block of positive size). 
The graph of an $n\times n$ matrix is a directed graph on $n$ nodes: there is a directed edge from node $v_j$ to node $v_i$ whenever the $ij$th entry of the matrix is nonzero. 
It is known that a matrix is irreducible if and only if its graph is strongly connected~\cite[Theorem 6.2.24]{horn1}. 

On the space of $n \times m$ real matrices, we define the following semi-norm: for a given $A\in\R^{n\times m}$, 
$$\|A\|_S := \max_{1\le j\le m}\max_{1\le i_1,i_2\le n}|a_{i_1j}-a_{i_2j}|.$$
The zero-set of this semi-norm is the set of matrices with all rows equal. See~\cite{wolf63} for more details.

The {\em support of a matrix} $A = [a_{ij}]$, denoted by $\supp(A)$, is the set of indices $ij$ such that $a_{ij}\neq 0$. We denote by $\min A$ the smallest non-zero entry of $A$: 
$$\min A = \min_{ij \in \supp(A)} a_{ij}.$$ 
In this paper, we will only consider $\min A$ for $A$ being a nonnegative matrix.

We say that $p \in \R^n$ is a {\em probability vector} if $p_i \geq 0$ and $\sum_{i=1}^n p_i=1$. The set of probability vectors in $\R^n$ is the $(n-1)$-simplex, which is denoted by $\Delta^{n-1}$. Its interior with respect to the standard Euclidean topology in $\R^n$ is denoted by $\rint \Delta^{n-1}$. If $p\in \rint \Delta^{n-1}$, then all entries of $p$ are positive. 

We let $\bfo$ be a vector of all ones, whose  dimension will be clear from the context.  

Given a real number $x$, we denote by $\lfloor x \rfloor$  the largest integer that is smaller than or equal to $x$, i.e., $\lfloor x \rfloor := \max_{z \in \mathbb{Z}} \lbrace z \mid z \leq x \rbrace$.

The Cartesian product of $k$ linearly independent open bounded line segments in an Euclidean space is called a { \em $k$-dimensional open box}. 
An open box is not necessarily  parallel to the coordinate axes.

We denote by $\R_+$ the set of positive real numbers.

\section{Main Results}\label{sec:mainresult}
We now present the main results proved in this paper, and the main concepts introduced.

\subsection{Local stochastic matrices and holonomy for digraphs}\label{ssec:lsmholo}

Let $G = (V, E)$ be a simple graph on $n$ nodes. Each node represents an agent, and each agent is assigned a variable $x_i(t) \in \R$ at the time step~$t$. To each edge $(v_i,v_j)$ of $G$, with $i < j$, corresponds a potential interaction of agents $i$ and $j$, whereby they update their current  states $x_i(t)$ and $x_j(t)$ (if this gossip pair is activated) according to the rule:
\begin{equation*}\label{eq:defupdate}
\begin{bmatrix} x_i(t+1) \\ x_j(t+1) \end{bmatrix}=\bar A_{ij}\begin{bmatrix} x_i(t) \\ x_j(t) \end{bmatrix},
\end{equation*}
where $\bar A_{ij}$ is the $2$-by-$2$ row stochastic matrix given by
\begin{equation}\label{eq:defbarAij}
\bar A_{ij}:=  \begin{bmatrix} 1-a_{ij} & a_{ij} \\ a_{ji} & 1-a_{ji}
\end{bmatrix},
\end{equation}
with $a_{ij}$ and $a_{ji}$ real numbers in the open interval $(0,1)$. 

During this update, all the other agents $k$, for $k\neq i, j$, keep their states unchanged: $x_k(t+1)=x_k(t)$. 
Thus, 
if we let $E_{ij}$ be the $n$-by-$n$ square matrix with $1$ at the $ij$th entry and $0$ elsewhere, then the update of the entire network can be described by $x(t+1) = A_{ij} x(t)$, where $A_{ij}$ is the $n$-by-$n$ row stochastic matrix defined as follows: 
\begin{align}\label{eq:deflsm}
    A_{ij} := \; & (1-a_{ij}) E_{ii} + a_{ij} E_{ij}
    + a_{ji} E_{ji} + (1-a_{ji}) E_{jj} \nonumber\\
    & + \sum_{k\neq i, j} E_{kk}. 
\end{align}
In words, the matrix $A_{ij}$ is such that the principal submatrix associated with columns/rows $i$ and $j$ is the $2\times 2$ stochastic matrix $\bar A_{ij}$, and the complementary principle submatrix is the identity matrix $I_{n-2}$. Note that $A_{ij} = A_{ji}$ from~\eqref{eq:deflsm}.  

We call these $A_{ij}$'s, for $(v_i,v_j)\in E$, {\bf local stochastic matrices} of $G$. 
The graph of each local stochastic matrix is a bi-directional graph with exactly two directed edges $v_iv_j$ and $v_jv_i$, and self-arcs at all $n$ nodes.

A local stochastic matrix $A_{ij}$, for $(v_i,v_j)$ an edge of $G$, has two degrees of freedom, namely $a_{ij}$ and $a_{ji}$ as defined in~\eqref{eq:deflsm}. 
We denote by $\cS_G$ the set of $|E|$-tuples of local stochastic matrices over a connected graph $G = (V, E)$, which is an open convex subset of an Euclidean space of dimension $2|E|$. 
Given an ordering of the edges in $G$, we will use $\cA=(A_{ij})_{(v_i,v_j) \in E}$ to denote an element of $\cS_G$.

For a finite sequence $\gamma = e_{1} \ldots e_{k}$ of edges in $G$ and for a given pair of integers $0\le s \le t \le k$, we define a product of local stochastic matrices as follows: for $t\geq s+1$,  
$$
P_\gamma(t:s) := A_{e_t} A_{e_{t - 1}} \cdots A_{e_{s+1}},
$$
and $P_\gamma(t:s)=I$ for $t \leq s$. For the case where $s = 0$ and $t = k$, we will simply write $P_\gamma = P_\gamma(k:0)$. 
The notation can be used on infinite strings $\gamma$, with $k = \infty$, as well. We single out the following sequences: 

\vspace{.15in}

\begin{Definition}[Spanning sequence]\label{def:spanningsequence} Let $G = (V, E)$ be a simple, undirected graph. 
A finite sequence of edges of $G$ is {\bf spanning} if it covers a spanning tree of $G$.  
An infinite sequence of edges is {\em spanning} if it has infinitely many disjoint finite strings that are spanning. 
An infinite sequence is {\em $m$-spanning} if every string of length $m$ is spanning. 
\end{Definition}

Let $\vec G = (V, \vec E)$ be the directed version of $G$. 
For each directed edge $v_iv_j$ in $\vec G$, we define the ratio
$$
r_{ij} :=  \frac{a_{ij}}{a_{ji}}.
$$
Note that $r_{ij}$ is well-defined, because $a_{ji}\in (0,1)$. Also, it follows from the definition that $r_{ji}=r_{ij}^{-1}$.
Let $w = v_{i_1}\ldots v_{i_k}$ be a walk in $\vec G$. We define
\begin{equation}\label{eq:defRw}
R_w:= \prod^{k-1}_{\ell = 1} r_{i_\ell,i_{\ell+1}}.  
\end{equation}

Let $w_1$ and $w_2$ be two walks in $G$, with $w_1$ ending at the starting node of $w_2$. 
Then, $R_{w_1w_2}=R_{w_1}R_{w_2}$. In particular, setting $w := w_1=w_2^{-1}$, we get $R_{ww^{-1}}=1$.

The following definition is instrumental to our results:

\vspace{.15in}

\begin{Definition}[Holonomic local stochastic matrices]\label{def:holonomicAij}
Let $C$ be a cycle in $\vec G$ of length greater than $2$. The local stochastic matrices $A_{ij}$ are {\bf holonomic} for $C$ if $R_C=1$, and are {\it holonomic} for $G$ if they are holonomic for every cycle of $\vec G$ of length of greater than $2$.
\end{Definition}

With foresight, we borrow the word {\em holonomic} from differential geometry to characterize the set of matrices. The justification of the name is the following: in geometry, this notion,  roughly speaking, describes variation of some quantity (e.g., a parallel-transported vector) along loops in a given space. If there is no variation of the quantity while `traveling' around the loop, the process is said to be `holonomic'. Here, the space is the graph and the quantity is the product of the ratios $r_{ij}$ along cycles of the graph. The notion of holonomy will appear through a formula, established below, that involves  the products of $r_{ij}$'s along walks in $G$. 
Clearly, for the product of these $r_{ij}$ along walks to depend {\em only} on the starting- and ending-nodes, it is necessary that such products along cycles be equal to~$1$; indeed, these cycles can be inserted an arbitrary amount of times to a walk without changing its starting nor its ending node. 
Because of such one-to-one correspondence, we use the term `holonomy' as a definition of the properties of the matrices $A_{ij}$ described in Definition~\ref{def:holonomicAij}.

\vspace{.15in}

\begin{Remark}\normalfont
Note that if $C$ is a $2$-cycle, then $R_C$ is $1$ by definition. Thus, if $G$ is a tree, then every set of local stochastic matrices is holonomic for $G$.    
\end{Remark}

\subsection{Statement of the main results}
We have three main results: the first two deal with convergence of infinite products of stochastic matrices and existence of a unique limiting rank one matrix, and the last one states that one can choose local stochastic matrices to obtain any desired limiting distribution of their products.

\subsubsection{Convergence of products and invariance of limits} 
For a given $\cA\in \cS_G$, we define
\begin{equation}\label{eq:underlinea}
\underline {a}:= \min_{(v_i,v_j)\in E} (\min A_{ij}) \quad \mbox{and} \quad {\epsilon:= \underline{a}^{n - 1}.}
\end{equation}
The first main result is as follows: 

\vspace{.15in}

\begin{Theorem}\label{th:main1}
Let $G = (V, E)$ be a simple, connected undirected graph on $n$ nodes. 
Then, for every set of local stochastic matrices $A_{ij}\in \R^{n\times n}$,  $(v_i,v_j)\in E$, defined as in~\eqref{eq:deflsm}, the following two statements are equivalent:
\begin{enumerate}
    \item[(i)] There is a unique $p\in \rint \Delta^{n-1}$ such that for any infinite spanning sequence $\gamma$,  
$P_\gamma =\bfo p^\top.$
\vspace{.1cm}
    \item[(ii)] The $A_{ij}$ are holonomic for $G$. 
\end{enumerate}
Furthermore, if the $A_{ij}$ are holonomic for $G$ and $\gamma$ is $m$-spanning, then 
\begin{equation}\label{eq:exponentialconvergence} 
\|P_{\gamma}(t:0) \|_S \le (1-\epsilon)^{\frac{t}{m \lfloor \frac{n}{2} \rfloor} - 1}.
\end{equation}
\end{Theorem}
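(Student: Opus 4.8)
The plan is to prove~\eqref{eq:exponentialconvergence} by showing that the semi-norm $\|\cdot\|_S$ contracts geometrically along the product, with one factor of contraction gained per block of $\lfloor n/2\rfloor$ consecutive spanning strings. The basic tool is a contraction estimate for left multiplication by a stochastic matrix. For stochastic $P$ and arbitrary $B$, writing $(PB)_{i_1 j}-(PB)_{i_2 j}=\sum_k (P_{i_1 k}-P_{i_2 k})B_{kj}$ and splitting the row-difference (which sums to zero) into its positive and negative parts yields $\|PB\|_S\le \tau(P)\,\|B\|_S$, where $\tau(P):=1-\min_{i_1,i_2}\sum_k \min(P_{i_1 k},P_{i_2 k})$ is the Dobrushin coefficient; note $\tau\le 1$ always, so the bound is sub-multiplicative. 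Since $\gamma$ is $m$-spanning, every length-$m$ string is spanning, so I cut $\gamma$ into blocks of $\lfloor n/2\rfloor$ such strings, of length $L:=m\lfloor n/2\rfloor$, and let $M$ denote the product of local matrices over one block. It then suffices to prove $\tau(M)\le 1-\epsilon$ with $\epsilon=\ul a^{\,n-1}$, i.e. to find, for every pair of rows $i_1,i_2$, a common column $k$ with $M_{i_1 k},M_{i_2 k}\ge \epsilon$. Crucially I analyse each $M$ as a product applied to the identity, so that the relevant ``source sets'' start as singletons and the per-block estimate chains without tracking intermediate states.

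For the combinatorial core I track, for each node $i$, the set $S_i\subseteq V$ of sources reaching $i$, i.e. $k\in S_i$ iff $M_{ik}\neq 0$; equivalently, $S_i$ is the set of nodes connected to $i$ by a time-respecting walk inside the block. The key is monotone growth: over a single spanning string each $S_i$ gains at least one node, until it equals $V$. Indeed, if $S_i\neq V$, the spanning tree covered by the string has an edge across the cut $(S_i,V\setminus S_i)$, and processing that edge (whose $S_i$-side endpoint already lies in the reachable set) transfers a new source into $S_i$. Starting from $S_i=\{i\}$, after $\lfloor n/2\rfloor$ spanning strings every $S_i$ has more than $n/2$ elements, so any two of them must intersect. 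A common element $k\in S_{i_1}\cap S_{i_2}$ is precisely a column in which both rows $i_1,i_2$ are positive, which gives the required scrambling.

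It remains to quantify the shared entries, and this is the step I expect to be the main obstacle. A common source $k$ only guarantees $M_{i_1 k},M_{i_2 k}>0$; to obtain the uniform lower bound $\ul a^{\,n-1}$ I would exhibit, for each of $i_1,i_2$, a time-respecting walk from $k$ of at most $n-1$ genuine hops whose weight stays $\ge \ul a^{\,n-1}$. Each hop across an activated edge contributes a factor $\ge \ul a$, and ``waiting'' at a node that is not currently gossiping costs a factor $1$; the danger is that a token-holding node is forced to gossip, and hence dilute its weight, before forwarding the token. I would control this by selecting an earliest-arrival walk along the spanning tree and arguing that along such a walk each node forwards the token at its first post-arrival activation, so that at most $n-1$ factors of $\ul a$ are ever incurred. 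This gives $\sum_k\min(M_{i_1 k},M_{i_2 k})\ge \min(M_{i_1 k},M_{i_2 k})\ge \epsilon$, hence $\tau(M)\le 1-\epsilon$.

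Finally I chain the blocks. Since $P_\gamma(t:0)$ contains $\lfloor t/L\rfloor$ complete blocks $M_1,\dots,M_{\lfloor t/L\rfloor}$, sub-multiplicativity together with $\tau(M_b)\le 1-\epsilon$ and $\|I\|_S=1$ gives $\|M_{\lfloor t/L\rfloor}\cdots M_1\|_S\le (1-\epsilon)^{\lfloor t/L\rfloor}$; multiplying on the left by the leftover incomplete block is non-expansive, so $\|P_\gamma(t:0)\|_S\le (1-\epsilon)^{\lfloor t/L\rfloor}\le (1-\epsilon)^{t/(m\lfloor n/2\rfloor)-1}$, which is exactly~\eqref{eq:exponentialconvergence}. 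I remark that holonomy plays no role in this contraction estimate: it enters the theorem only through the identification of the common limit $\bfo p^\top$ in the equivalence of parts~(i) and~(ii).
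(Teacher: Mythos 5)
Your argument addresses only the ``furthermore'' clause, i.e., the rate bound \eqref{eq:exponentialconvergence}, and for that part your route is essentially the paper's: the Dobrushin coefficient you call $\tau$ coincides with the coefficient of ergodicity $\mu$ used in the paper, your cut/source-set growth argument is the content of Lemma~\ref{lm:neighborshared} (the product over $\lfloor n/2\rfloor$ spanning strings is scrambling), and the final chaining is identical. The one place you yourself flag as the main obstacle --- the uniform lower bound $M_{i_1k},M_{i_2k}\ge\underline{a}^{\,n-1}$ on the shared column --- is indeed not a proof as written: ``selecting an earliest-arrival walk and arguing that each node forwards the token at its first post-arrival activation'' does not by itself control the dilution of weight at a node that must keep gossiping after (or before) forwarding. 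The paper closes this by an induction on the size of the support of each column (Lemma~\ref{lem:gammaveeedge} and Proposition~\ref{prop:mindelta}): the smallest nonzero entry of a column is non-decreasing as long as the column's support is constant, and each strict growth of the support costs at most one factor $\underline{a}$; since the support can grow at most $n-1$ times, every nonzero entry of every partial product is at least $\underline{a}^{\,n-1}$. Your sketch is recoverable along these lines, but as it stands it is a gap.

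The more serious omission is that you do not prove the equivalence (i)~$\Leftrightarrow$~(ii), which is the actual content of the theorem; you dismiss it in one sentence. Two separate arguments are missing. For (ii)~$\Rightarrow$~(i), the contraction only shows that the rows of $P_\gamma(t:0)$ coalesce; to identify the limit as $\bfo p^\top$ with $p$ independent of $\gamma$ one needs that holonomic $A_{ij}$ admit the common left eigenvector $p$ of Algorithm~1 (Proposition~\ref{prop:p}), so that $p^\top P_\gamma(t:0)=p^\top$ for all $t$ pins down the limit. (Also, statement~(i) concerns arbitrary infinite spanning sequences, not only $m$-spanning ones, so the contracting blocks must be chosen adaptively rather than of fixed length $m\lfloor n/2\rfloor$.) For (i)~$\Rightarrow$~(ii), one must show that without holonomy no single $p$ works for all spanning sequences; the paper does this by exhibiting two spanning trees whose restricted matrix families have distinct invariant vectors $p'\neq p''$ (Proposition~\ref{prop:distinctpnonholo}) and two spanning sequences supported on those trees, whose products converge to $\bfo p'^\top\neq\bfo p''^\top$. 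Nothing in your proposal substitutes for either direction.
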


\vspace{.15in}

\begin{Remark}\normalfont
Note that uniqueness of the  probability vector $p$ is with respect to a given set of holonomic local stochastic matrices $A_{ij}$, and with respect to {\em all} infinite spanning sequences $\gamma$. 
The dependence of $p$ on the $A_{ij}$'s will be characterized shortly in Algorithm~1 below.
Also, it is easy to verify that the standard gossiping process has local stochastic matrices such that $\bar A_{ij} = \frac{1}{2} \bfo\bfo^\top$ for all $(v_i,v_j)\in E$, so $r_{ij} = 1$. Thus, these local stochastic matrices $A_{ij}$ are  holonomic for {\em any} connected graph.  For this special case, the corresponding probability vector~$p$ is simply $\frac{1}{n}\bfo$.
\end{Remark}

Note that an infinite spanning sequence can be obtained with probability one by selecting an edge out of $E$ uniformly at random. The following fact is then an immediate consequence of Theorem~\ref{th:main1}:

\vspace{.15in}

\begin{Corollary}\label{cor:randomsequence}
Let $\gamma$ be a simple random sequence obtained by selecting an edge out of $E$ uniformly at random.  
If the $A_{ij}$ are holonomic for $G$, then there exists a unique probability vector $p$ such that $P_\gamma = \bfo p^\top$ with probability one.   
\end{Corollary}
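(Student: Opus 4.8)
The plan is to deduce Corollary~\ref{cor:randomsequence} directly from Theorem~\ref{th:main1} by showing that a uniformly random edge sequence is almost surely spanning in the sense of Definition~\ref{def:spanningsequence}. Since the $A_{ij}$ are assumed holonomic for $G$, the equivalence in Theorem~\ref{th:main1} already supplies a unique $p \in \rint\Delta^{n-1}$ with the property that $P_\gamma = \bfo p^\top$ for \emph{every} infinite spanning sequence $\gamma$. So the entire content of the corollary reduces to verifying that the random sequence $\gamma = e_1 e_2 \cdots$, with each $e_i$ drawn independently and uniformly from $E$, is spanning with probability one.

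The first step is to recall that an infinite sequence is spanning precisely when it contains infinitely many disjoint finite strings each of which covers a spanning tree of $G$. To establish this almost surely, I would fix any particular spanning tree $T$ of $G$ (which exists since $G$ is connected) with edge set $E_T = \{f_1, \ldots, f_{n-1}\}$, and argue that the fixed block of $|E_T|$ consecutive draws realizing exactly $f_1, \ldots, f_{n-1}$ in order is a spanning string. The probability that a block of length $n-1$ reproduces this exact edge list is $(1/|E|)^{n-1} =: q > 0$. Partition the infinite index set into consecutive disjoint blocks $B_1, B_2, \ldots$, each of length $n-1$; the events ``$B_k$ realizes $f_1 \cdots f_{n-1}$'' are independent across $k$ and each has probability $q$.

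The second step is a Borel--Cantelli argument: since $\sum_k q = \infty$ and the block events are independent, the second Borel--Cantelli lemma gives that infinitely many blocks $B_k$ are spanning strings with probability one. These blocks are disjoint by construction, so the realized infinite sequence contains infinitely many disjoint spanning strings almost surely, which is exactly the definition of an infinite spanning sequence. (An even simpler alternative is to note that each edge is covered infinitely often almost surely by independent Borel--Cantelli applied to each edge, and then partition the sequence into finite windows each containing one full appearance of a spanning tree; either route works.) Applying Theorem~\ref{th:main1}(i) to this almost-surely-spanning $\gamma$ then yields $P_\gamma = \bfo p^\top$ with probability one, with $p$ the same unique vector guaranteed by the theorem.

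I do not anticipate a serious obstacle here, as the corollary is essentially a measure-theoretic wrapper around Theorem~\ref{th:main1}; the only point requiring mild care is confirming that the almost-sure event ``$\gamma$ is spanning'' matches the combinatorial definition verbatim, namely producing \emph{disjoint} spanning strings rather than merely covering every edge infinitely often. The disjoint-block construction above handles this cleanly, so the proof amounts to one Borel--Cantelli step followed by an invocation of the main theorem.
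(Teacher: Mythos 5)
Your proposal is correct and follows exactly the route the paper intends: the paper simply remarks that a uniformly random edge sequence is almost surely spanning and then invokes Theorem~\ref{th:main1}, and your disjoint-block Borel--Cantelli argument is the natural formalization of that remark. No gaps.
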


We characterize below the probability vector $p$. We do so by first presenting a positive vector, denoted by $q = [q_1;\cdots; q_n]$, and then normalizing it. The entire procedure is summarized in the following algorithm:

\noindent {\bf Algorithm 1.} Construction of $p$:  
\begin{description}
    \item[Step 1:] Pick an arbitrary node, say $v_1$, of $G$, and  set $q_1 := 1$.
    \item[Step 2:] For all  nodes $v_i$, $i \neq 1$, of $G$,  let $w$ be an arbitrary walk {\em from} $v_1$ {\em to} $v_i$ in $\vec G$ (since $G$ is connected, such a walk always exists). Define $q_i:= R_w$. 
    \item[Step 3:] Normalize the vector $q$ by 
    \begin{equation}\label{eq:defp}
        p := \frac{q}{\sum^n_{i = 1} q_i}. 
    \end{equation}
\end{description}

It should be clear that every entry of $q$, defined in Steps 1 and 2, is positive, so the vector $p$ is well defined. 

\vspace{.15in}

\begin{Theorem}\label{th:defp}
The probability vector $p$ in Theorem~\ref{th:main1} is given by~\eqref{eq:defp}.
\end{Theorem}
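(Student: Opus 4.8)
The plan is to show that the unnormalized vector $q$ produced by Algorithm~1 is a common \emph{left} fixed vector of all the local stochastic matrices, i.e.\ $q^\top A_{ij} = q^\top$ for every edge $(v_i,v_j)\in E$. Granting this, the conclusion is immediate: for any infinite spanning sequence $\gamma$ one has $q^\top P_\gamma(t:0)=q^\top$ for every $t$, by peeling off the factors one at a time from the left; letting $t\to\infty$ and invoking $P_\gamma=\bfo p^\top$ from Theorem~\ref{th:main1} yields $q^\top \bfo\, p^\top = q^\top$. Since $q^\top\bfo=\sum_{i=1}^n q_i$ is a positive scalar, this gives $p^\top = q^\top/\sum_{i=1}^n q_i$, which is exactly~\eqref{eq:defp}.

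First I would check that $q$ is well defined, namely that $q_i=R_w$ does not depend on the walk $w$ from $v_1$ to $v_i$ chosen in Step~2. This is where holonomy enters. Two walks $w_1,w_2$ from $v_1$ to $v_i$ satisfy $R_{w_1}/R_{w_2}=R_{w_1 w_2^{-1}}$, where $w_1 w_2^{-1}$ is a closed walk in $\vec G$. Any closed walk decomposes into cycles, and since $R$ is multiplicative over concatenation and equals $1$ on every cycle (on $2$-cycles because $r_{ji}=r_{ij}^{-1}$, and on cycles of length greater than $2$ by Definition~\ref{def:holonomicAij}, which applies because statement~(i) of Theorem~\ref{th:main1} forces the $A_{ij}$ to be holonomic), we get $R_{w_1 w_2^{-1}}=1$ and hence $R_{w_1}=R_{w_2}$. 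This also makes $q_1=1$ consistent with the empty walk at $v_1$.

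Next I would establish the local fixed-point identity. Because $A_{ij}$ agrees with the identity on every coordinate $k\neq i,j$, the vector equation $q^\top A_{ij}=q^\top$ is nontrivial only in columns $i$ and $j$, where it reduces to the single relation $q_i a_{ij}=q_j a_{ji}$, equivalently $q_j/q_i = a_{ij}/a_{ji}=r_{ij}$. This holds by construction: taking a walk $w$ from $v_1$ to $v_i$ and appending the directed edge $v_iv_j$ gives a walk from $v_1$ to $v_j$, so by the well-definedness just established $q_j=R_{w\, v_iv_j}=R_w\, r_{ij}=q_i\, r_{ij}$. (It is worth noting that $q_i a_{ij}=q_j a_{ji}$ is the detailed-balance relation and that the holonomy condition $R_C=1$ is precisely Kolmogorov's reversibility criterion; this is the structural reason a common left fixed vector exists at all.)

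The only genuine subtlety is the well-definedness step in the second paragraph; everything else is a one-line reduction once $q$ is known to be a common left fixed vector. The limit passage in the first paragraph is legitimate precisely because Theorem~\ref{th:main1} already guarantees convergence of $P_\gamma$ to the rank-one matrix $\bfo p^\top$, so no separate argument about existence of the limit is required here, and positivity of $q$ (hence of $\sum_i q_i$) is clear since each $q_i$ is a product of positive ratios $r_{i_\ell,i_{\ell+1}}$.
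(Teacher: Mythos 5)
Your proposal is correct and follows essentially the same route as the paper: it establishes that the Algorithm~1 vector is a common left fixed vector of all the $A_{ij}$ (the paper's Proposition~\ref{prop:p}, resting on the walk-independence arguments of Proposition~\ref{prop:1forclosedwalk} and Corollary~\ref{cor:corpropclosedwalk}), and then combines the invariance $q^\top P_{\gamma}(t:0)=q^\top$ with the rank-one convergence from Theorem~\ref{th:main1} to identify the limit. The only cosmetic difference is that you phrase the local identity as the detailed-balance relation $q_i a_{ij}=q_j a_{ji}$ on the unnormalized $q$, whereas the paper identifies $[q_i;q_j]$ as proportional to the left Perron vector $\bar r_{ij}$ of $\bar A_{ij}$; these are the same computation.
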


\vspace{.15in}

\begin{Remark}\normalfont
From its construction, $p$ appears to depend on both the base node chosen ($v_1$ above, Step~1), and on the walks from nodes $v_j$ to $v_1$ chosen (Step~2). 
On the way of proving the main results, we will show that, under the assumption that the local stochastic matrices are holonomic for $G$,  $p$ is in fact independent of these two parameters.
\end{Remark}

\subsubsection{The space of holonomic local stochastic matrices}

In this subsection, we study the set of $|E|$-tuples of holonomic local stochastic matrices for $G$ as a subset of $\cS_G$: 
\begin{equation}\label{eq:defHG}
\cH_G := \{\cA \in \cS_G \mid R_C=1 \mbox{ for all cycles in } \vec G\}.     
\end{equation}
Since  holonomic constraints arise only if cycles are present, if $G$ is a tree, then $\cH_G = \cS_G$.

By Theorems~\ref{th:main1} and~\ref{th:defp}, an element $\cA\in \cH_G$ gives rise to a unique probability vector $p$, defined in Algorithm~1.   
Formally, we define a map $\pi$ as follows: 
\begin{equation}\label{eq:defpi}
\pi: \cA \in \cH_G \mapsto p \in \rint \Delta^{n-1}. 
\end{equation}
Following the steps of Algorithm~1, it is easy to see that $\pi$ is analytic. We  now characterize the preimages $\pi^{-1}(p)$ precisely:

\vspace{.15in}

\begin{Theorem}\label{th:mainsurjective}
The map $\pi$ defined in~\eqref{eq:defpi} is surjective. For each $p\in \rint \Delta^{n-1}$, the preimage $\pi^{-1}(p)$ is an $|E|$-dimensional open box.  
\end{Theorem}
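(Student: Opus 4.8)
The plan is to reduce the entire theorem to a single decoupling observation: for a holonomic $\cA$, the ratio on each directed edge is forced by the target distribution via $r_{ij} = p_j/p_i$, and once this is seen the fiber splits as a product over edges. First I would unwind Algorithm~1 together with Theorem~\ref{th:defp}. If $\cA \in \cH_G$ satisfies $\pi(\cA) = p$, then the vector $q$ built in Steps~1--2 satisfies $p = q/\sum_k q_k$, so $q_i = (\sum_k q_k)\,p_i$ and hence $q \propto p$. Extending a walk from $v_1$ to $v_i$ by the single edge $v_iv_j$ and using $R_{w_1w_2}=R_{w_1}R_{w_2}$ gives $q_j = q_i\, r_{ij}$, whence $r_{ij} = q_j/q_i = p_j/p_i$ for every edge $(v_i,v_j)\in E$. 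Conversely, if $r_{ij}=p_j/p_i$ on every edge, then along any closed walk the factors $p_{i_{\ell+1}}/p_{i_\ell}$ telescope, so $R_C=1$ for every cycle and $\cA$ is \emph{automatically} holonomic; the same telescoping along a walk from $v_1$ to $v_i$ gives $q_i = p_i/p_1$, and normalizing yields $\pi(\cA)=p$ because $\sum_k p_k = 1$. This establishes the key identity
$$\pi^{-1}(p) = \{\cA \in \cS_G : a_{ij}/a_{ji} = p_j/p_i \text{ for all } (v_i,v_j)\in E\},$$
and surjectivity is then immediate: since every $p_i>0$, for each edge one may take $a_{ji}=s$ and $a_{ij}=(p_j/p_i)s$ with $s>0$ small enough that both lie in $(0,1)$, producing an element of the set above, so $\pi^{-1}(p)\neq\varnothing$.

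For the box structure I would exploit that the constraint $a_{ij}/a_{ji}=p_j/p_i$ involves only the two coordinates attached to the edge $(v_i,v_j)$, so it decouples across edges. Thus $\pi^{-1}(p)$ is the Cartesian product, over the $|E|$ edges, of the planar sets
$$L_{ij} := \{(a_{ij},a_{ji})\in(0,1)^2 : a_{ij}=(p_j/p_i)\,a_{ji}\}.$$
Each $L_{ij}$ is the intersection of the open unit square with a line of positive, finite slope through the origin, hence an open bounded line segment running from the excluded corner $(0,0)$ to the excluded boundary point where the line first exits the square. Because distinct edges occupy pairwise-disjoint coordinate planes of $\R^{2|E|}$, the direction vectors of the segments $L_{ij}$ are linearly independent, so their Cartesian product is, by the paper's definition, an $|E|$-dimensional open box.

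I expect the only delicate point to be bookkeeping rather than a conceptual obstacle, namely confirming that $\pi^{-1}(p)$ computed \emph{inside} the domain $\cH_G$ coincides with the ratio-constrained set defined inside the larger space $\cS_G$. This is exactly where the automatic-holonomy telescoping is needed: it guarantees that the ratio constraints already place $\cA$ in $\cH_G$, so no extra cycle conditions survive to shrink the fiber. After that, checking that each $L_{ij}$ is a nondegenerate open bounded segment and that the product matches the stated notion of open box is routine.
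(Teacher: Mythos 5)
Your proposal is correct and follows essentially the same route as the paper: the paper factors $\pi$ through the ratio space as $\theta^{-1}\circ\phi$ (Propositions~\ref{prop:maphisujective} and~\ref{prop:onetooneontoY}), which is precisely your identity $\pi^{-1}(p)=\{\cA : a_{ij}/a_{ji}=p_j/p_i \text{ on every edge}\}$ together with the edge-by-edge decoupling into open bounded segments of $(0,1)^2$. Your telescoping observation that the ratio constraints automatically force holonomy is exactly the paper's remark that the image of $\theta$ lies in $\cY_{\cH}$, so nothing is missing.
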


It is an immediate consequence of the theorem that the dimension of $\cH_G$ is $(n+|E|-1)$;  indeed, since the dimension of $\pi^{-1}(p)$ is independent of $p$ and since $\pi$ is onto $\Delta^{n-1}$, the dimension of $\cH_G$  is the sum of the dimension of $\Delta^{n-1}$, which is $(n-1)$, and the dimension of some (and, hence, any) preimage $\pi^{-1}(p)$, which is $|E|$. Note that the segments defining the box are not necessarily aligned with the coordinate axes, and will generally be slanted.

\section{Analysis and Proofs of Theorems}\label{sec:proofs}
In this section, we establish relevant propositions and  prove the main results. 

\subsection{Holonomy and Algorithm~1}\label{ssec:holoalgo}
In the subsection, we show that the output of Algorithm~1 
is indeed independent of the base node chosen in Step 1 and the walks chosen in Step 2. These statements are proven in Proposition~\ref{prop:pindepbase}, and in Proposition~\ref{prop:1forclosedwalk} and Corollary~\ref{cor:corpropclosedwalk} respectively.

\vspace{.15in}

\begin{Proposition}\label{prop:1forclosedwalk}
Let $A_{ij}$ be a set of local stochastic matrices that are holonomic for $G$, and $w$ be a closed walk in $\vec G$. Then, $R_w = 1$.
\end{Proposition}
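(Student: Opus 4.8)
The plan is to prove the statement by induction on the number of edges of the closed walk $w$, decomposing an arbitrary closed walk into shorter closed sub-walks until every piece is a cycle, for which $R=1$ holds either by the holonomy hypothesis (cycles of length greater than $2$) or by the relation $r_{ji}=r_{ij}^{-1}$ (back-and-forth $2$-cycles). The two facts I would use repeatedly are the multiplicativity $R_{w_1w_2}=R_{w_1}R_{w_2}$ and the identity $r_{ji}=r_{ij}^{-1}$, both recorded just before the statement.

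For the base cases, observe that a closed walk whose only repeated vertex is the coincidence of its starting- and ending-node is a cycle. If its length exceeds $2$, then $R_w=1$ is precisely the holonomy hypothesis; if its length equals $2$, say $w=v_iv_jv_i$, then $R_w=r_{ij}r_{ji}=1$; and a degenerate walk of length $0$ contributes the empty product, equal to $1$.

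For the inductive step, I would take a closed walk $w=v_{i_1}\ldots v_{i_k}$ (so $v_{i_1}=v_{i_k}$) that is not a cycle, i.e.\ some interior vertex is repeated: there exist indices $1\le a<b\le k$ with $(a,b)\neq(1,k)$ and $v_{i_a}=v_{i_b}$. I then split $w$ into the sub-walk $w_1:=v_{i_a}\ldots v_{i_b}$ and the complementary walk $w_2:=v_{i_1}\ldots v_{i_a}v_{i_{b+1}}\ldots v_{i_k}$. Since $v_{i_a}=v_{i_b}$, both $w_1$ and $w_2$ are again closed walks, and splitting the defining product of $R_w$ at positions $a$ and $b$ yields $R_w=R_{w_1}R_{w_2}$ by multiplicativity. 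As each of $w_1,w_2$ has strictly fewer edges than $w$, the induction hypothesis gives $R_{w_1}=R_{w_2}=1$, and hence $R_w=1$.

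The only point requiring care—more bookkeeping than genuine obstacle—is verifying that the split produces two strictly shorter closed walks. This is exactly where the condition $(a,b)\neq(1,k)$ enters: it forces $1\le b-a\le k-2$, so $w_1$ has between $1$ and $k-2$ edges and $w_2$ inherits the remaining $k-1-(b-a)$ edges, again between $1$ and $k-2$; both counts are strictly below the $k-1$ edges of $w$, so the induction is well-founded. Everything else is formal, and I expect the write-up to be brief.
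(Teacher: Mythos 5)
Your proof is correct and takes essentially the same approach as the paper: the paper's one-line argument decomposes the closed walk edge-wise into disjoint cycles and invokes multiplicativity of $R$, while your induction simply makes that decomposition explicit by peeling off a closed sub-walk at a repeated interior vertex. The only added value of your version is bookkeeping rigor, including the explicit treatment of the $2$-cycle base case via $r_{ij}r_{ji}=1$, which the paper handles in a separate remark.
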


\begin{proof}
Any closed walk $w$ can be decomposed, edge-wise, into a union of disjoint cycles, labeled as $C_1, \ldots, C_k$. Then, $R_w = R_{C_1}\cdots R_{C_k}$. From Definition~\ref{def:holonomicAij}, $R_{C_i} = 1$ for every $i =1,\ldots, k$ and, hence, $R_w = 1$.   
\end{proof}

The next result follows as a corollary to Proposition~\ref{prop:1forclosedwalk}:

\vspace{.15in}

\begin{Corollary}\label{cor:corpropclosedwalk}
Let $A_{ij}$ be a set of local stochastic matrices that are holonomic for $G$. Let $w$ and $w'$ be two distinct walks in $\vec G$ from the same node $v_i$ to the same node $v_j$. Then, $R_{w}=R_{w'}$.
\end{Corollary}

\begin{proof}
By concatenating $w$ with $w'^{-1}$, we obtain a closed walk, which we denote by $w^*$. On one hand, by Proposition~\ref{prop:1forclosedwalk}, $R_{w^*} = 1$. On the other hand, $R_{w^*} = R_w R_{w'^{-1}} = R_{w}/R_{w'}$. It follows that $R_{w} = R_{w'}$.  
\end{proof}

The above corollary has shown that if the base node $v_i$, chosen in Step~1 of Algorithm 1, is fixed, then the value of other entries $q_j$, for $j\neq i$, are independent of the choices of walks from $v_i$ to $v_j$. 
Though the value of the vector $q$ depends on a particular choice of   base node,  we show below that this dependence only changes a normalization constant. Consequently, the value of the vector $p$ in Step~3 is independent of said base node. 

We now let $q$ and $q'$ be the vectors obtained from Algorithm~1 by using $v$ and $v'$, respectively, as the base nodes. 

\vspace{.15in}

\begin{Proposition}\label{prop:pindepbase}
Let $w$ be any walk from $v'$ to $v$ in $\vec G$. Then, $q' = R_w q$.  
\end{Proposition}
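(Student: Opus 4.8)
The plan is to relate the two vectors $q$ and $q'$ entry-by-entry by exploiting the multiplicativity of $R$ along concatenated walks, together with the order-independence of $R$ established in Corollary~\ref{cor:corpropclosedwalk}. Recall that $q$ is built from the base node $v$ by setting the $v$-entry to $1$ and assigning to any other node $v_i$ the value $R_{w_i}$, where $w_i$ is an arbitrary walk from $v$ to $v_i$; likewise $q'$ is built from the base node $v'$. By Corollary~\ref{cor:corpropclosedwalk}, each of these entries is well-defined, i.e. independent of the particular walk chosen, so I am free to pick convenient walks in the argument below.

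First I would fix the given walk $w$ from $v'$ to $v$. For an arbitrary node $v_i$, let $w_i$ be any walk from $v$ to $v_i$; then $ww_i$ is a walk from $v'$ to $v_i$ (the ending node of $w$ is $v$, the starting node of $w_i$). By Step~2 of Algorithm~1 applied with base node $v'$, the $i$th entry of $q'$ equals $R$ evaluated along \emph{some} walk from $v'$ to $v_i$, and by Corollary~\ref{cor:corpropclosedwalk} we may take that walk to be $ww_i$. Using the multiplicativity property $R_{w_1w_2}=R_{w_1}R_{w_2}$ stated in the excerpt, I get
\begin{equation*}
q'_i = R_{ww_i} = R_w R_{w_i} = R_w \, q_i,
\end{equation*}
where the last equality is the definition of $q_i$ from base node $v$. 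Since $i$ was arbitrary, this gives $q' = R_w q$ as claimed.

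The only detail needing care is the base entries themselves: for $v_i = v'$ one has $q'_{i}=1$ by Step~1, and the formula must still read $q'_{i}=R_w q_{i}$; this forces $q_{i}=R_w^{-1}=R_{w^{-1}}$, which is exactly $R$ along a walk from $v$ to $v'$, consistent with Step~2 for base $v$ and with $R_{w^{-1}}=R_w^{-1}$ (following from $R_{ww^{-1}}=1$). Similarly for $v_i=v$ the formula reads $q'_{i}=R_w\cdot 1=R_w$, agreeing with the $v$-entry of $q'$. So the boundary cases are automatically consistent and require no separate treatment. I do not anticipate a genuine obstacle here; the one subtlety worth stating explicitly is the appeal to Corollary~\ref{cor:corpropclosedwalk} to legitimize the choice of the concatenated walk $ww_i$ in computing $q'_i$, since without path-independence the entries of $q'$ would not be canonically defined and the clean identity $q'=R_w q$ could not be asserted.
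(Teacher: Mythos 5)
Your proof is correct and follows essentially the same route as the paper's: for each node $v_i$, concatenate the given walk $w$ with a walk $w_i$ from $v$ to $v_i$, invoke multiplicativity $R_{ww_i}=R_wR_{w_i}$, and use path-independence (Corollary~\ref{cor:corpropclosedwalk}) to identify $R_{ww_i}$ with $q_i'$. Your explicit check of the boundary entries at $v$ and $v'$ is a small addition the paper handles implicitly via the empty-walk convention, but it changes nothing substantive.
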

  
\begin{proof}
We establish the proposition by showing that for $1\leq i\leq n$, we have $q'_i = R_w q_i$. Let $w_i$ be a walk from node $v$ to $v_i$ (if $v_i = v$, then $w_i$ can be the empty walk), and $w'_i$ be the walk from node $v'$ to $v_i$ obtained by concatenating the given $w$ and $w_i$. Then, $R_{w'_i} = R_w R_{w_i}$. 
From Algorithm 1, we know that $q_i = R_{w_i}$ and $q'_i = R_{w'_i}$, so $q'_i = R_w q_i$ as desired. 
\end{proof}

\subsection{A common left-eigenvector}\label{ssec:proppv}
This subsection is devoted to first showing that the vector $p$ of Theorem~\ref{th:defp} is in fact a left-eigenvector of all local stochastic matrices $A_{ij}$ for $G$, associated with the eigenvalue~$1$, provided that these matrices are holonomic for $G$. This result is the first step in the proof of Theorem~\ref{th:main1}. We also show here that if the holonomic constraint is not met, the vector $p$ is not well-defined.

\vspace{.15in}

\begin{Proposition}\label{prop:p}
There is a probability vector $p$ such that $p^\top A_{ij} = p^\top$ for all $(v_i,v_j)\in E$ if and only if the set of $A_{ij}$ is holonomic for $G$. Moreover, if such $p$ exists, then it is unique.  
\end{Proposition}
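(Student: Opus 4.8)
The plan is to first translate the eigenvector equation $p^\top A_{ij} = p^\top$ into a simple ratio condition on the entries of $p$, and then to recognize that solvability of this system over the whole graph is exactly the holonomy condition. First I would expand $p^\top A_{ij} = p^\top$ coordinate by coordinate. Since $A_{ij}$ agrees with the identity outside rows and columns $i,j$, every coordinate $k \neq i,j$ is satisfied automatically, and the coordinates $i$ and $j$ both reduce to the single ``detailed balance'' equation
$$p_i a_{ij} = p_j a_{ji}, \qquad \text{equivalently} \qquad \frac{p_j}{p_i} = r_{ij}.$$
Thus $p^\top A_{ij} = p^\top$ holds for all edges if and only if $p_j/p_i = r_{ij}$ for every directed edge $v_iv_j$ of $\vec G$. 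I would also observe at this stage that any such probability vector must be strictly positive: if $p_i = 0$, then $p_j a_{ji} = 0$ forces $p_j = 0$ for every neighbor $j$ (as the $a_{ji}$ are positive), and connectedness of $G$ would propagate this to $p \equiv 0$, contradicting $\sum_k p_k = 1$. Hence any valid $p$ lies in $\rint \Delta^{n-1}$.

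For the ``if'' direction (holonomic $\Rightarrow$ existence), I would simply verify that the vector produced by Algorithm~1 satisfies the ratio condition. Let $q$ be built with base node $v_1$; by Corollary~\ref{cor:corpropclosedwalk} the entries $q_i = R_w$ are well defined under holonomy. For an edge $(v_i,v_j)$, concatenating a walk $w$ from $v_1$ to $v_i$ with the edge $v_iv_j$ gives a walk from $v_1$ to $v_j$, so $q_j = R_w \, r_{ij} = q_i r_{ij}$, i.e.\ $q_j/q_i = r_{ij}$. Normalizing $q$ to $p$ preserves all ratios, so $p^\top A_{ij} = p^\top$ for every edge.

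For the ``only if'' direction (existence $\Rightarrow$ holonomic), I would telescope. Given a strictly positive $p$ with $p_j/p_i = r_{ij}$, any cycle $C = v_{i_1}\cdots v_{i_k}$ in $\vec G$ with $v_{i_k} = v_{i_1}$ satisfies
$$R_C = \prod_{\ell=1}^{k-1} r_{i_\ell i_{\ell+1}} = \prod_{\ell=1}^{k-1} \frac{p_{i_{\ell+1}}}{p_{i_\ell}} = \frac{p_{i_k}}{p_{i_1}} = 1,$$
so the $A_{ij}$ are holonomic for $G$. Uniqueness then follows from the same ratio relations: fixing a base node $v_1$ and a walk $w$ from $v_1$ to $v_i$, any valid $p$ must satisfy $p_i = p_1 R_w$, so two valid probability vectors agree up to the positive scalar $p_1$; since both sum to one, that scalar equals $1$ and they coincide.

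The only real content is the reduction in the first step and the recognition that ``a consistent edgewise assignment of the ratios $r_{ij}$ can be realized as ratios of entries of a single positive vector'' is precisely a discrete closed-versus-exact statement, whose obstruction is exactly the product of ratios around cycles; everything else is bookkeeping already packaged in Algorithm~1 and Corollary~\ref{cor:corpropclosedwalk}. I expect the main (minor) obstacle to be carrying out the coordinate-wise computation cleanly enough that the two equations coming from coordinates $i$ and $j$ are seen to collapse to the same relation $p_i a_{ij} = p_j a_{ji}$.
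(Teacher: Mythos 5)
Your proposal is correct and follows essentially the same route as the paper: the paper packages your coordinate-wise reduction to $p_j = r_{ij}p_i$ as Lemma~\ref{lem:uniquep}, your walk-telescoping as Lemma~\ref{lem:newuniquep}, and then argues existence via Algorithm~1 and necessity/uniqueness via ratio propagation exactly as you do (the paper phrases necessity as a contradiction on a closed walk with $R_w\neq 1$, which is the contrapositive of your telescoping identity). No gaps.
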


To prove Proposition~\ref{prop:p}, we use the following lemmas:

\vspace{.15in}

\begin{Lemma}\label{lem:uniquep}
Let $\bar r_{ij}:=[1; r_{ij}] \in \R^2$. 
Then, $\bar r_{ij}$ is the unique left-eigenvector (up to scaling) of $\bar A_{ij}$ corresponding to eigenvalue~$1$. 
\end{Lemma}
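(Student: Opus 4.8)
The plan is to verify directly that $\bar r_{ij} = [1; r_{ij}]$ spans the left-kernel of $\bar A_{ij} - I$, and then to argue that this kernel is one-dimensional by showing that eigenvalue $1$ is simple. First I would compute
$$\bar A_{ij} - I = \begin{bmatrix} -a_{ij} & a_{ij} \\ a_{ji} & -a_{ji} \end{bmatrix},$$
and impose the left-eigenvector condition $v^\top (\bar A_{ij} - I) = 0$ on a vector $v = [v_1; v_2]$. This yields $v^\top(\bar A_{ij}-I) = [\,-v_1 a_{ij} + v_2 a_{ji},\; v_1 a_{ij} - v_2 a_{ji}\,]$, whose two coordinates agree up to sign, so the single scalar equation is $-v_1 a_{ij} + v_2 a_{ji} = 0$. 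Using $a_{ji} \in (0,1) \neq 0$, this is equivalent to $v_2 = (a_{ij}/a_{ji}) v_1 = r_{ij}\, v_1$. Taking $v_1 = 1$ recovers exactly $\bar r_{ij} = [1; r_{ij}]$, confirming that $\bar r_{ij}^\top \bar A_{ij} = \bar r_{ij}^\top$.

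For the uniqueness claim (up to scaling), I would show that eigenvalue $1$ is simple, so that its associated left-eigenspace is exactly one-dimensional. Since $\bar A_{ij}$ is $2 \times 2$, its two eigenvalues sum to its trace $\tr \bar A_{ij} = (1 - a_{ij}) + (1 - a_{ji})$. As $1$ is one eigenvalue (every stochastic matrix has it), the other must be $\tr \bar A_{ij} - 1 = 1 - a_{ij} - a_{ji}$. Because $a_{ij}, a_{ji} \in (0,1)$, we have $a_{ij} + a_{ji} > 0$, so the second eigenvalue is strictly less than $1$ and hence distinct from it. Thus eigenvalue $1$ has algebraic (and therefore geometric) multiplicity one, which forces the left-kernel of $\bar A_{ij} - I$ to be one-dimensional. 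This matches the single free parameter $v_1$ found above and establishes uniqueness up to scaling.

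Both steps are short and explicit, so I do not anticipate a genuine obstacle. The only point requiring care is to invoke $a_{ij} + a_{ji} > 0$, guaranteed by the standing hypothesis that both entries lie in the open interval $(0,1)$, in order to separate the two eigenvalues; merely locating the second eigenvalue in $[-1,1]$ would not by itself rule out a coincidence with $1$.
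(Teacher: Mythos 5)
Your proof is correct. The verification that $\bar r_{ij}$ is a left-eigenvector is essentially the same as the paper's (the paper multiplies out $\bar A_{ij}^\top \bar r_{ij}$ directly rather than solving $v^\top(\bar A_{ij}-I)=0$, but this is the same computation). Where you differ is on uniqueness: the paper simply invokes the fact that $\bar A_{ij}$ is an irreducible (indeed entrywise positive) stochastic matrix, i.e.\ appeals to Perron--Frobenius to conclude that the eigenvalue $1$ is simple, whereas you give a self-contained argument via the trace, locating the second eigenvalue at $1-a_{ij}-a_{ji}<1$. Your route is more elementary and makes explicit exactly where the hypothesis $a_{ij},a_{ji}\in(0,1)$ is used; the paper's is shorter and generalizes beyond the $2\times 2$ case. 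One small remark: your trace argument is actually redundant, since your first computation already shows that the left-kernel of $\bar A_{ij}-I$ is cut out by the single nondegenerate equation $-v_1a_{ij}+v_2a_{ji}=0$ (nondegenerate because $a_{ij},a_{ji}\neq 0$), hence is exactly one-dimensional --- which is all that uniqueness up to scaling requires. Either way, the argument is sound.
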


\begin{proof}
Recall that $r_{ij} = a_{ij}/a_{ji}$.  
It follows that  
\begin{align*}
 \bar A_{ij}^\top \bar r_{ij} & = \begin{bmatrix}
    1-a_{ij} & a_{ji} \\
    a_{ij} & 1-a_{ji}
    \end{bmatrix}
    \begin{bmatrix}
    1 \\
    r_{ij}
\end{bmatrix}
     = 
    \begin{bmatrix}
    (1-a_{ij}) + a_{ji} r_{ij} \\
    a_{ij}  + (1-a_{ji}) r_{ij}
    \end{bmatrix} \\
    & = 
    \begin{bmatrix}
    (1-a_{ij}) + a_{ij}   \\
    a_{ji} r_{ij} + (1-a_{ji}) r_{ij} 
    \end{bmatrix} 
    = 
    \begin{bmatrix}
    1 \\
    r_{ij}
    \end{bmatrix} = \bar r_{ij},
\end{align*}
so $\bar r_{ij}$ is a left-eigenvector of $\bar A_{ij}$ corresponding to eigenvalue~$1$. The uniqueness follows from the fact that $\bar A_{ij}$ is an irreducible stochastic matrix (with all entries being positive).    
\end{proof}

We next have the following fact: 

\vspace{.15in}

\begin{Lemma}\label{lem:newuniquep}
If there exists a vector $p \neq 0$ such that $p^\top A_{ij} = p^\top$ for any $(v_i,v_j)\in E$, then for any walk $w$ in $\vec G$ from node $v_\ell$ to node $v_k$, it holds that $p_k = R_w p_\ell$. 
\end{Lemma}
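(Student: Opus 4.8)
The plan is to first reduce the global left-eigenvector condition to a per-edge relation, and then telescope that relation along the walk $w$.

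First I would unpack the hypothesis $p^\top A_{ij} = p^\top$ coordinate by coordinate. Since $A_{ij}$ acts as the identity on every coordinate $m \neq i,j$ (recall from~\eqref{eq:deflsm} that the complementary principal submatrix is $I_{n-2}$), the equation holds automatically in those coordinates and carries no information. The only nontrivial content lives in coordinates $i$ and $j$, where the condition reads $[p_i,\, p_j]\, \bar A_{ij} = [p_i,\, p_j]$; that is, $[p_i,\, p_j]$ is a left-eigenvector of the $2\times 2$ block $\bar A_{ij}$ for the eigenvalue~$1$. By Lemma~\ref{lem:uniquep} such a left-eigenvector must be a scalar multiple of $\bar r_{ij} = [1;\, r_{ij}]$, whence $p_j = r_{ij}\, p_i$. (Equivalently, one reads this relation directly off the two scalar equations coming from the $i$- and $j$-coordinates, both of which simplify to $p_i a_{ij} = p_j a_{ji}$.) The conclusion of this step is the edge identity $p_j = r_{ij}\, p_i$, valid for every directed edge $v_i v_j$ of $\vec G$.

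Next I would iterate this edge identity along the walk. Writing $w = v_{i_1} v_{i_2}\cdots v_{i_s}$ with $v_{i_1} = v_\ell$ and $v_{i_s} = v_k$, the edge identity applied to each consecutive pair gives $p_{i_{m+1}} = r_{i_m i_{m+1}}\, p_{i_m}$ for $m = 1,\ldots, s-1$. Chaining these equalities telescopes to
\[
p_{i_s} = \Big(\prod_{m=1}^{s-1} r_{i_m i_{m+1}}\Big)\, p_{i_1} = R_w\, p_{i_1},
\]
the last equality being the definition~\eqref{eq:defRw} of $R_w$. This is exactly $p_k = R_w\, p_\ell$, as claimed.

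I do not anticipate a serious obstacle here: the result is essentially the observation that the common left-eigenvector condition forces $p$ to respect the ratios $r_{ij}$ across each edge, and $R_w$ is defined precisely to accumulate these ratios multiplicatively. The one point to handle cleanly is that the edge identity $p_j = r_{ij} p_i$ holds unconditionally, including when $p_i$ (and hence $p_j$) vanishes, so no nonvanishing assumption on the entries of $p$ is needed for the telescoping.
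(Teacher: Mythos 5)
Your proposal is correct and follows essentially the same route as the paper: reduce $p^\top A_{ij} = p^\top$ to the $2\times 2$ block, invoke Lemma~\ref{lem:uniquep} to get $p_j = r_{ij}p_i$ on each edge, and telescope along the walk. Your added remark that the edge identity $p_ia_{ij}=p_ja_{ji}$ holds even when $p_i=0$ is a small but welcome tightening of a point the paper passes over silently.
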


\begin{proof}
Since $p^\top A_{ij} = p^\top$, the vector $[p_i;p_j]$, with $i < j$, is a left-eigenvector of $\bar A_{ij}$ corresponding to eigenvalue $1$. 
From Lemma~\ref{lem:uniquep}, $[p_i;p_j]$ is necessarily proportional to $\bar r_{ij}$ and thus $p_j = r_{ij} p_i$. Thus, we can apply this relation repeatedly along the sequence of the edges in $w$ and obtain that $p_k = R_w p_\ell$.
\end{proof}

With Lemmas~\ref{lem:uniquep} and \ref{lem:newuniquep}
above, we prove Proposition~\ref{prop:p}:  

\begin{proof}[Proof of Proposition~\ref{prop:p}]
We first assume that the set of $A_{ij}$ is holonomic for $G$. 
Let $q$ and $p$ be given as in Algorithm 1. Since $q$ and $p$ differ by a multiplicative factor, it suffices to show that $q^\top A_{ij} = q^\top$. 
By construction (see Eq.~\eqref{eq:deflsm}), the matrix $A_{ij}$  is equal to the identity matrix save for the  $2\times 2$ principal submatrix at columns/rows $i$ and $j$, 
$\bar A_{ij}$. 
Without loss of generality, we assume that $i < j$, and let $\bar q_{ij}:= [q_i;q_j]$. It is enough to show that for any pair $i <j$,  
$\bar q_{ij}$ is a left-eigenvector of $\bar A_{ij}$ with  eigenvalue~$1$.   Since $(v_i,v_j)$ is an edge of $G$, $q_j = r_{ij} q_i$ by Algorithm~1. Thus, $\bar q_{ij}$ is proportional to $\bar r_{ij}$ introduced in Lemma~\ref{lem:uniquep} and, hence, $\bar q_{ij}^\top \bar A_{ij} = \bar q_{ij}^\top$.

We now show  that $p$ is the only probability vector that satisfies $p^\top A_{ij} = p^\top $ for all $(v_i,v_j)\in E$. 
Let $p'$ be another such probability vector. To every $A_{ij}$, the equality $p'^\top A_{ij} = p'^\top$ implies that the two entries $p'_i$ and $p'_j$ satisfy
$p'_j = r_{ij} p'_i$ by Lemma~\ref{lem:uniquep}.
By Lemma~\ref{lem:newuniquep}, if we fix a base node, say $v_1$, of $G$, and let $w$ be a walk from $v_1$ to $v_i$ (since $G$ is connected), then $p'_i = R_w p'_1$ for all $i = 2,\ldots,n$. From Step 2 in Algorithm~1, we see that $p'$ is proportional to $p$ and, hence, $p' = p$.

It remains to show that if the set of $A_{ij}$'s is not holonomic for $G$, then no  $p$ such that $p^\top A_{ij} = p$, for all $(v_i,v_j)\in E$, exists. 
We proceed  by contradiction and assume that there exists such a vector $p$.  Then, for any walk $w$ starting at $v_i$ and ending at $v_j$, we have from Lemma~\ref{lem:newuniquep} that $p_j= R_w p_i$. 

Because $R_w$ is always positive and because $\vec G$ is strongly connected, every entry of $p$ is nonzero (otherwise, $p$ has to be the zero vector). 
But, since the set of $A_{ij}$ is not holonomic, there exists a closed walk $w$ in $\vec G$ such that $R_{w}\neq 1$. Pick a node, say $v_i$, in $w$; then, $p_i = R_w p_i \neq p_i$, which is a contradiction. This completes the proof. 
\end{proof}

Let $\cA \in \cS_G$. To any spanning tree $G' = (V, E')$ of $G$, the corresponding subset of local stochastic matrices $A_{ij}$, for $(v_i,v_j) \in E'$, is always holonomic for $G'$. 

Thus, the follow result is an immediate consequence of Proposition~\ref{prop:p}:

\vspace{.15in}

\begin{Corollary}\label{cor:treetoprobvec}
To every spanning tree $G' = (V, E')$ of $G$ we can assign a unique probability vector $p'$ such that 
\begin{equation}\label{eq:propp}
p'^\top A_{ij} = p'^\top \mbox{ for all } (v_i, v_j)\in E'.
\end{equation}
\end{Corollary}

Note, in particular, that if the local stochastic matrices are holonomic for $G$, then for any two spanning trees $G'$ and $G''$ of $G$, their associated probability vectors $p'$ and $p''$ are equal. 
Conversely, we have the following:

\vspace{.15in}

\begin{Proposition}\label{prop:distinctpnonholo}
Suppose that the set of $A_{ij}$ is not holonomic for $G$; then, there exist two distinct spanning trees $G'$ and $G''$ of $G$ with distinct probability vectors~$p'$ and~$p''$ satisfying Eq.~\eqref{eq:propp} 
\end{Proposition}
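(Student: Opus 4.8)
\emph{Proof proposal.} Since the $A_{ij}$ are not holonomic for $G$, Definition~\ref{def:holonomicAij} provides a cycle $C$ in $\vec G$ of length $k>2$, say $C=v_{i_1}v_{i_2}\cdots v_{i_k}v_{i_1}$ with distinct nodes, such that $R_C\neq 1$. Because $G$ is simple and $k>2$, the underlying undirected edges $e_1=(v_{i_1},v_{i_2}),\,e_2=(v_{i_2},v_{i_3}),\,\ldots,\,e_k=(v_{i_k},v_{i_1})$ are distinct and form an undirected cycle $C_0$ of $G$. The plan is to build two spanning trees that agree everywhere except in how they connect the adjacent cycle-nodes $v_{i_1}$ and $v_{i_2}$: one routes through the direct edge $e_1$, the other through the complementary arc $e_2\cdots e_k$. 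Since $R_C\neq 1$, these two routes carry different products of ratios, and the difference will survive into the associated probability vectors.

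Next I would construct the trees explicitly. The arc $e_2,\ldots,e_k$ is a path $v_{i_2}-v_{i_3}-\cdots-v_{i_k}-v_{i_1}$ through all nodes of $C_0$ and avoids $e_1$; since $G\setminus e_1$ is still connected (the arc reconnects $v_{i_1}$ and $v_{i_2}$), I extend this path to a spanning tree $G'$ of $G$ containing $e_2,\ldots,e_k$ but not $e_1$. I then set $G'':=(G'\setminus\{e_k\})\cup\{e_1\}$. One checks that $G''$ is again a spanning tree: deleting $e_k$ from the tree $G'$ separates $v_{i_1}$ from $v_{i_2}$ (their unique $G'$-path runs along the arc and uses $e_k$), and inserting $e_1=(v_{i_1},v_{i_2})$ reconnects exactly these two components. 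By construction $G'$ and $G''$ are distinct.

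To compare the probability vectors $p'$ and $p''$ of Corollary~\ref{cor:treetoprobvec}, I would take $v_{i_1}$ as base node and apply the argument of Lemma~\ref{lem:newuniquep} along walks lying entirely in each tree (equivalently, run Algorithm~1 on each tree), so that $p'_{i_2}/p'_{i_1}$ and $p''_{i_2}/p''_{i_1}$ each equal the product $R$ of the unique tree-path from $v_{i_1}$ to $v_{i_2}$. In $G''$ this path is the single edge $e_1$, giving ratio $r_{i_1 i_2}$. In $G'$ this path is the reversed arc $v_{i_1}\to v_{i_k}\to\cdots\to v_{i_2}$; writing $R_C=r_{i_1 i_2}R_{\mathrm{rest}}$ with $R_{\mathrm{rest}}$ the product along $v_{i_2}\to\cdots\to v_{i_k}\to v_{i_1}$ and using $r_{ji}=r_{ij}^{-1}$, this ratio equals $R_{\mathrm{rest}}^{-1}=r_{i_1 i_2}/R_C$. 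Since $R_C\neq 1$, the two ratios differ; as both $p'$ and $p''$ have strictly positive entries, equality $p'=p''$ would force equal coordinate ratios, a contradiction. Hence $p'\neq p''$.

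The hard part will be bookkeeping rather than conceptual. I must (i) guarantee that the single-edge swap $G'\mapsto G''$ genuinely yields a spanning tree, the key point being that $e_k$ is a bridge of $G'$ whose removal places $v_{i_1}$ and $v_{i_2}$ in different components; and (ii) express the two tree-path products through $R_C$ rather than comparing unnormalized entries, since $p'$ and $p''$ are normalized independently. Phrasing the final comparison through the scale-invariant ratio $p_{i_2}/p_{i_1}$ is what neutralizes the differing normalizations and makes $R_C\neq 1$ decisive.
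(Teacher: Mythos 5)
Your proposal is correct and follows essentially the same route as the paper's proof: pick a cycle $C$ with $R_C\neq 1$, build one spanning tree containing the long arc of the cycle and another containing the complementary edge, and distinguish $p'$ from $p''$ via the scale-invariant ratio of the entries at that edge's endpoints, which equals $r_{i_1i_2}$ in one tree and $r_{i_1i_2}/R_C$ in the other. Your single-edge-swap construction of $G''$ is merely a more explicit way of producing the two trees the paper posits directly.
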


\begin{proof}
Because the set of $A_{ij}$ is not holonomic for $G$, there exists at least one cycle $C = v_1v_2\cdots v_k v_1$ of $\vec G$ such that $R_C \neq 1$. 
Let $G'$ and $G''$ be two spanning trees such that $G'$ contains edges $(v_\ell, v_{\ell+1})$, for all $\ell = 1,\ldots,k-1$ and $G''$ contains the edge $(v_1,v_k)$. 
It should be clear that such $G'$ and $G''$ exist and are distinct because $G'$ cannot contain the edge $(v_1,v_k)$. 
 By Corollary~\ref{cor:treetoprobvec}, we can uniquely assign the
 probability vectors $p'$ and $p''$ to  $G'$ and $G''$, respectively. 
We claim that $p'$ and $p''$ are distinct. 
To establish the claim, we let $w := v_1\cdots v_k$ be the unique path in $\vec G'$ from $v_1$ to $v_k$. 
Then, since $p'$ satisfies Eq.~\eqref{eq:propp}, by Lemma~\ref{lem:newuniquep}, we have that
$p'_{k} = R_w p'_{1}$. 
Similarly, for $\vec G''$, since $v_1v_k$ is a directed edge, we have that  
$p''_{k} = r_{1k} p''_{1}$. 
But, $R_C = R_w r_{k1} = R_w /r_{1k}\neq 1$, which implies that the two ratios $p'_{k}/p'_{1}$ and $p''_{k}/p''_{1}$ are different. This completes the proof.
\end{proof}

\subsection{Uniform lower bound for nonzero entries of $P_\gamma$}

For a given vector $z\in \R^n_{\geq 0}$,  recall that $\supp(z)$ is the support of $z$. Let $\gamma$ be a walk in $G$ and $ z_\gamma:= P_\gamma z$. If $\gamma$ is an empty walk, then $P_\gamma=I$ and $z_\gamma = z$. 

We also recall that $\min z_{\gamma}$ the smallest non-zero entry of $z_\gamma$, i.e., $\min z_\gamma = \min \{ z_{\gamma,i} \mid i\in \supp(z_\gamma)\} $. 
Note that if $P$ is an arbitrary $n\times n$ nonnegative matrix with positive diagonal entries, then $\supp(z) \subseteq \supp(Pz)$; indeed, if $z_{i} > 0$, then  
$(Pz)_{i} \geq  P_{ii}z_{i}  > 0$. 
As a consequence, we have the following fact: 
\begin{equation}
\label{eq:suppinclusion}\gamma_1 \mbox{ is a string of } \gamma_2 \;\Rightarrow\; 
 \supp(z_{\gamma_1}) \subseteq \supp(z_{\gamma_2}).
\end{equation}

When we consider a nested family of edge strings $\gamma_1 \subseteq \gamma_2\subseteq \cdots $ for which the supports of the corresponding $z_{\gamma_i}$ are the same, the smallest non-zero entry over the support is non-decreasing as shown below:

\vspace{.15in}

\begin{Lemma}\label{lem:gammaveeedge}
Let $\gamma$ be a string of edges and $e$ be an edge of $G$. Let $\gamma':= \gamma \vee e$. If $\supp(z_{\gamma'}) = \supp(z_\gamma)$, then $\min z_{\gamma'}  \geq \min z_{\gamma}$. 
\end{Lemma}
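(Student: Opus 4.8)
The plan is to translate the statement about $\min z_{\gamma'}$ into a coordinate-wise comparison, after first recognizing that passing from $\gamma$ to $\gamma' = \gamma \vee e$ amounts to a single application of the local stochastic matrix $A_e$. Since appending $e$ to the \emph{end} of $\gamma$ multiplies the product on the left, we have $P_{\gamma'} = A_e P_\gamma$, and hence $z_{\gamma'} = A_e z_\gamma$. Writing $e = (v_i,v_j)$, recall from~\eqref{eq:deflsm} that $A_e$ agrees with the identity on every coordinate $k \neq i,j$ and mixes the coordinates $i,j$ through the $2\times 2$ stochastic matrix $\bar A_{ij}$ of~\eqref{eq:defbarAij}, all of whose entries are strictly positive because $a_{ij},a_{ji}\in(0,1)$.

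First I would dispose of the coordinates $k\neq i,j$: they satisfy $z_{\gamma',k}=z_{\gamma,k}$, so any such entry lying in $\supp(z_{\gamma'})$ is unchanged and is trivially at least $\min z_\gamma$. It then remains only to control the two entries $z_{\gamma',i}$ and $z_{\gamma',j}$. Setting $\alpha := z_{\gamma,i}$ and $\beta := z_{\gamma,j}$, both nonnegative since $z\ge 0$ and products of local stochastic matrices preserve nonnegativity, we get $z_{\gamma',i} = (1-a_{ij})\alpha + a_{ij}\beta$ and $z_{\gamma',j} = a_{ji}\alpha + (1-a_{ji})\beta$, each a convex combination of $\alpha$ and $\beta$ with strictly positive weights.

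The hard part, and the crux of the argument, will be to use the hypothesis $\supp(z_{\gamma'}) = \supp(z_\gamma)$ to show that $\alpha$ and $\beta$ are \emph{either both zero or both nonzero}. Because all four coefficients are positive and $\alpha,\beta\ge 0$, each of $z_{\gamma',i}$ and $z_{\gamma',j}$ vanishes exactly when $\alpha=\beta=0$; thus $i\in\supp(z_{\gamma'})$ iff $j\in\supp(z_{\gamma'})$ iff $(\alpha,\beta)\neq(0,0)$, whereas $i\in\supp(z_\gamma)$ iff $\alpha\neq 0$ and $j\in\supp(z_\gamma)$ iff $\beta\neq 0$. Matching the two supports at coordinates $i$ and $j$ forces $\alpha\neq 0 \iff \beta\neq 0$. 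This is exactly the place where the support hypothesis does the work: it rules out the dangerous configuration $\alpha=0<\beta$, in which $z_{\gamma',i}=a_{ij}\beta$ could otherwise fall below $\min z_\gamma$.

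To finish, I would split into the two surviving cases. If $\alpha=\beta=0$, then $z_{\gamma',i}=z_{\gamma',j}=0$, so these coordinates lie outside $\supp(z_{\gamma'})$ and do not enter the minimum. If instead $\alpha,\beta>0$, then both are nonzero entries of $z_\gamma$, whence $\alpha,\beta \ge \min z_\gamma =: m$; since a convex combination of two numbers each at least $m$ is itself at least $m$, we obtain $z_{\gamma',i}\ge m$ and $z_{\gamma',j}\ge m$. Together with the unchanged coordinates $k\neq i,j$, this shows every entry of $z_{\gamma'}$ in its support is bounded below by $\min z_\gamma$, which is precisely $\min z_{\gamma'} \ge \min z_\gamma$.
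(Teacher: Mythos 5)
Your proof is correct and follows essentially the same route as the paper's: reduce to the two coordinates mixed by $\bar A_{ij}$, use the support hypothesis to show those two entries of $z_\gamma$ are either both zero or both nonzero, and then conclude by the convex-combination argument in the nontrivial case. No gaps.
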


\begin{proof}
For convenience, but without loss of generality, we assume that $e = (v_1, v_2)$, so $A_{12} =\diag(\bar A_{12}, I_{n-2})$, where $\bar A_{12}$ was defined in Eq.~\eqref{eq:defbarAij}.

The matrix $P_{\gamma'}$  differs from $P_{\gamma}$ in its first two rows only;  we denote by $z_{\gamma,1}$ and $z_{\gamma,2}$ the first two entries of $z_\gamma$. 
Since $\supp(z_{\gamma'}) = \supp(z_{\gamma})$ and since all entries of $\bar A_{12}$ are positive, 
we have that $z_{\gamma,1}$ is $0$ if and only if $z_{\gamma,2}$ is $0$; indeed, say $z_{\gamma,1} = 0$ and $z_{\gamma,2} > 0$, then, after multiplication, the first entry of $z_{\gamma'}$ will be positive, contradicting the fact that $\supp(z_{\gamma'}) = \supp(z_{\gamma})$. 
Now, consider the following two cases:  
\vspace{.1cm}

\noindent
{\em Case 1.} If $z_{\gamma,1} = z_{\gamma,2} = 0$, then $z_\gamma = z_{\gamma'}$ and, hence, $\min z_\gamma = \min z_{\gamma'}$. 
\vspace{.1cm}

\noindent
{\em Case 2.} If $z_{\gamma,1} \neq 0$ (and, hence, $z_{\gamma,2}\neq 0$), then the first and second entries of $z_{\gamma'}$ are given by  
$$
\begin{bmatrix}
z_{\gamma',1} \\
z_{\gamma',2}
\end{bmatrix} = 
\bar A_{12} 
\begin{bmatrix}
z_{\gamma,1} \\
z_{\gamma,2}
\end{bmatrix}.
$$
Because $\bar A_{12}$ is a stochastic matrix, both $z_{\gamma',1}$ and  $z_{\gamma',2}$ are convex combinations of $z_{\gamma,1}$ and $z_{\gamma,2}$.
Thus,
$$
\min 
\begin{bmatrix}
z_{\gamma',1} \\
z_{\gamma', 2}
\end{bmatrix} \ge \min \begin{bmatrix}
z_{\gamma,1} \\
z_{\gamma,2}
\end{bmatrix} \geq \min z_{\gamma}. 
$$
In either case, we conclude that $\min z_{\gamma'} \ge \min z_{\gamma}$.
\end{proof}

With Lemma~\ref{lem:gammaveeedge}, we can now establish a lower bound on  $\min P_\gamma$:  

\vspace{.15in}

\begin{Proposition}\label{prop:mindelta}
Let $\cA\in \cS_G$ and $\underline a$ be defined as~\eqref{eq:underlinea}.  Then, for any sequence $\gamma$ of edges,  
$$\min P_\gamma > \underline{a}^{n-1} = \epsilon.$$
\end{Proposition}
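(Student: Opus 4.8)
The plan is to reduce the matrix bound to a statement about individual columns and then track how the smallest nonzero entry of a column deteriorates as the word $\gamma$ is read one edge at a time. Writing $e_j$ for the $j$th standard basis vector, the $j$th column of $P_\gamma$ is precisely $P_\gamma e_j$, so its smallest nonzero entry is $\min z_\gamma$ with $z=e_j$, and
\[
\min P_\gamma = \min_{1\le j\le n}\ \min z_\gamma, \qquad z=e_j .
\]
Hence it suffices to bound $\min z_\gamma$ below by $\underline{a}^{\,n-1}$, uniformly over the base index $j$ and over $\gamma$.

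First I would control the motion of the support. By~\eqref{eq:suppinclusion} the support is nondecreasing along $\gamma$, and appending a single edge $e=(v_a,v_b)$ can enlarge it by at most one index: $A_{ab}$ is the identity off rows/columns $a,b$, all entries are nonnegative so no cancellation occurs, and a new nonzero can therefore appear only at whichever of $a,b$ lay outside the previous support (if both lay outside, the two affected entries stay zero). Since $z=e_j$ starts with support $\{j\}$ of size one and the support can never exceed size $n$, reading $\gamma$ produces at most $n-1$ \emph{growth steps}, meaning appended edges that strictly enlarge the support.

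Next I would estimate the drop of $\min z_\gamma$ at each appended edge. If the edge is not a growth step, Lemma~\ref{lem:gammaveeedge} gives $\min z_{\gamma'} \ge \min z_\gamma$, so there is no drop. If it is a growth step, say it adds $b$ via $e=(v_a,v_b)$ with $a$ already covered, then $z_{\gamma,b}=0$, and multiplying by the block $\bar A_{ab}$ of~\eqref{eq:defbarAij} yields $z_{\gamma',a}=(1-a_{ab})\,z_{\gamma,a}$ and $z_{\gamma',b}=a_{ba}\,z_{\gamma,a}$, while every other entry is unchanged. Because $1-a_{ab}\ge \underline{a}$, $a_{ba}\ge \underline{a}$, and $z_{\gamma,a}\ge \min z_\gamma$, both modified entries are $\ge \underline{a}\,\min z_\gamma$, and the untouched entries are $\ge \min z_\gamma \ge \underline{a}\,\min z_\gamma$; hence $\min z_{\gamma'} \ge \underline{a}\,\min z_\gamma$. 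Starting from the smallest nonzero entry of $e_j$, which is $1$, and paying a factor $\underline{a}$ at no more than $n-1$ growth steps, I obtain $\min z_\gamma \ge \underline{a}^{\,n-1}$, and minimizing over $j$ gives $\min P_\gamma \ge \underline{a}^{\,n-1}=\epsilon$.

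The step I expect to be the real obstacle is upgrading this to the \emph{strict} inequality. The argument above cleanly yields only $\min P_\gamma \ge \epsilon$, which is in fact all the later ergodicity estimate~\eqref{eq:exponentialconvergence} needs (it uses $1-\epsilon<1$). Two cases are genuinely strict and easy to isolate: if $\gamma$ is empty then $P_\gamma=I$ and $\min P_\gamma = 1 > \epsilon$; and if no column's support ever reaches size $n$, then at most $n-2$ growth steps occur and the bound improves to $\underline{a}^{\,n-2} > \underline{a}^{\,n-1}$ since $\underline a\in(0,1)$. The bound is tight precisely in the remaining case, where some column's support fills all $n$ nodes through $n-1$ growth steps, each operating on the current minimizing index with an edge weight equal to $\underline{a}$; I would therefore scrutinize this full-support configuration most carefully, since it is exactly where one must decide whether the separation from $\epsilon$ can be guaranteed.
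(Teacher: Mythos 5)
Your argument is essentially the paper's: both reduce to the columns $P_\gamma e_j$, observe that the support grows by at most one index per appended edge, charge a factor $\underline{a}$ to each of the at most $n-1$ support-growth steps, and invoke Lemma~\ref{lem:gammaveeedge} to rule out any decrease at the non-growth steps; the paper merely packages this as an induction on the support size $N_{\gamma,i}$.

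On the issue you flag at the end: you are right that the argument only yields $\min P_\gamma \ge \epsilon$, and the strict inequality in the statement cannot be rescued in the remaining case you isolate --- it genuinely fails. Take the path $v_1$--$v_2$--$v_3$ with $a_{12}=a_{21}=a_{23}=a_{32}=1/2$, so $\underline a = 1/2$ and $\epsilon = 1/4$; for $\gamma = (v_1,v_2)(v_2,v_3)$ one computes $P_\gamma e_1 = A_{23}A_{12}e_1 = [1/2;\,1/4;\,1/4]$, hence $\min P_\gamma = 1/4 = \epsilon$ exactly. The paper's own proof establishes only the weak inequality as well (it concludes $\min P_{\gamma,i} \ge \underline a^{\,N_{\gamma,i}-1} \ge \underline a^{\,n-1}$), and, as you observe, $\min P_\gamma \ge \epsilon$ is all the downstream estimate~\eqref{eq:exponentialconvergence} needs, since $\mu \le 1-\epsilon < 1$ already gives the contraction. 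So your proof is complete once the claimed $>$ is read as $\ge$; no further scrutiny of the full-support configuration is needed or possible.
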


\begin{proof}
Let $\{e_i\}_{i = 1}^n$ be the standard basis of $\R^n$. Then, the $i$th column of $P_\gamma$, denoted by $P_{\gamma,i}$, is  given by $P_\gamma e_i$. It should be clear that $\min P_\gamma = \min_{i = 1}^n P_{\gamma,i}$. Thus, it suffices to show that $\min P_{\gamma,i} \geq \underline{a}^{n-1}$ for all $i = 1,\ldots, n$. 

To establish the fact, we fix an arbitrary $i\in \{1,\ldots, n\}$, and let $N_{\gamma, i}$ be the cardinality of $\supp P_{\gamma,i}$. We will show below that 
\begin{equation}\label{eq:inductiveproof}
\min P_{\gamma, i} \geq \underline{a}^{N_{\gamma,i} -1}.
\end{equation} 
Note that if~\eqref{eq:inductiveproof} holds, then the proof is done because $N_{\gamma,i}$ is bounded above by~$n$ and, hence, $\min P_{\gamma, i} \geq \underline{a}^{n-1}$. 

The proof of~\eqref{eq:inductiveproof} is by induction on $N_{\gamma,i}$. 
For the base case $N_{\gamma,i} = 1$, the sequence $\gamma$ can only comprise  edges $(v_j,v_k)$ that are not incident to node $v_i$. To see this,  note that by the definition of the local stochastic matrices, if $\gamma$ does not contain any edge incident to node $v_i$, then $P_{\gamma,i} = e_i$ and, hence, $N_{\gamma,i} = 1$.  
Next, we assume that $\gamma$ contains an edge incident to $v_i$, and let $\gamma_t=(v_i,v_j)$ be the first such edge in $\gamma$. 
Then, by the above arguments, $P_{\gamma,i}(t-1:0) = e_i$. Moreover, 
using the same arguments as in the proof of Lemma~\ref{lem:gammaveeedge}, we have that both the $i$th and $j$th entry of $P_{\gamma,i}(t:0)$ are nonzero. Further, since the support of $P_{\gamma,i}$ is monotonic by~\eqref{eq:suppinclusion}, we have $N_{\gamma,i}\geq 2$.   
We have thus shown that if $N_{\gamma,i} = 1$, then $P_{\gamma,i} = e_i$ and $\min P_{\gamma,i} = 1$.

For the inductive step, we assume that the statement holds for any $\gamma$ with $N_{\gamma,i} = k$ (for $1\leq k \leq n-1$), and prove that it holds for any $\gamma$ with $N_{\gamma,i} = k + 1$. 

For any given $\gamma$ with $N_{\gamma,i} = k + 1$, we let $t\ge 1$ be chosen such that the two strings $\gamma':=\gamma(t:0)$ and $\gamma'':= \gamma(t + 1:0)$ satisfy the condition that $N_{\gamma',i}= k$ and $N_{\gamma'',i} = k+1$. 
Such $t$ exists because $N_{\gamma(t:0),i}$ is a monotonically non-decreasing function in~$t$ due to Eq.~\eqref{eq:suppinclusion} and $\supp e_i = 1$.

Let $A_{ij}$ be the local stochastic matrix corresponding to the last edge in $\gamma''$. It is so that $P_{\gamma''} = A_{ij} P_{\gamma'}$. By the induction hypothesis, $\min P_{\gamma',i} \geq {\underline a}^{k-1}$, we have that
$$
\min P_{\gamma'',i} \geq \min A_{ij} \min P_{\gamma',i} \geq \underline{a}^{k}.  
$$
Finally, note that the sequence $\gamma$ is obtained from $\gamma''$ by adding edges to the end of $\gamma''$. One can thus iteratively apply Lemma~\ref{lem:gammaveeedge} to obtain that $\min P_{\gamma,i} \ge \min P_{\gamma'',i}$. This completes the proof. 
\end{proof}

\subsection{Proofs of Theorems~\ref{th:main1} and~\ref{th:defp}}

For a stochastic matrix $A\in\R^{n\times n}$, its coefficient of ergodicity \cite{seneta} is defined as
$$\mu(A) = \frac{1}{2}\max_{i,j}\sum_{k=1}^{n}|a_{ik}-a_{jk}|.$$
We always have that $\mu(A) \le 1$. 
It has been shown in \cite[Lemma 3]{hajnal} that for any two stochastic matrices $P$ and $Q$, 
\begin{equation}\label{eq:contraction}
    \|PQ\|_S \le \mu(P)\|Q\|_S.
\end{equation}

A stochastic matrix $A$ is called a {\em scrambling matrix} if no pair of  rows of $A$ are orthogonal.  
The following result is well known (see, e.g., Eq.~(25) in~\cite{reachingp2}):

\vspace{.15in}

\begin{Lemma}\label{lem:scrambling}
For any scrambling matrix $A$, 
\begin{equation*}\label{eq:scrambling}
    \mu(A)\le 1- \min A.
\end{equation*}
\end{Lemma}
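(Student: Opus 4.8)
The plan is to reduce the bound on $\mu(A)$ to a statement about a single pair of rows and then to invoke the scrambling hypothesis at one shared column. First I would fix an arbitrary pair of row indices $i,j$ and view the two rows $u := (a_{i1},\ldots,a_{in})$ and $v := (a_{j1},\ldots,a_{jn})$ as probability vectors, which is legitimate because $A$ is stochastic. The expression inside the maximum defining $\mu(A)$ is then exactly $\tfrac12\sum_{k=1}^n |u_k - v_k|$, i.e., the total variation distance between these two distributions, and it suffices to bound this quantity by $1 - \min A$ uniformly in $i,j$.

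The key identity I would use is the standard total variation formula
$$\tfrac12\sum_{k=1}^n |u_k - v_k| = 1 - \sum_{k=1}^n \min(u_k, v_k),$$
which follows from $\sum_k u_k = \sum_k v_k = 1$ by splitting each $|u_k - v_k|$ into its positive and negative parts and observing that, since the two vectors have equal total mass, these two parts have equal sum. Thus bounding $\tfrac12\sum_k|u_k-v_k|$ from above is equivalent to bounding $\sum_k \min(u_k, v_k)$ from below. This is the step where the scrambling property enters: since rows $i$ and $j$ are not orthogonal, there is at least one column $k_0$ with $a_{ik_0} > 0$ and $a_{jk_0} > 0$. Both entries are then nonzero entries of $A$, hence each is at least $\min A$, so $\min(u_{k_0}, v_{k_0}) \ge \min A$. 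Because every term $\min(u_k, v_k)$ is nonnegative, this single column already gives $\sum_k \min(u_k, v_k) \ge \min A$. Combining with the identity yields $\tfrac12\sum_k |a_{ik} - a_{jk}| \le 1 - \min A$ for the fixed pair, and taking the maximum over all pairs $i,j$ finishes the argument.

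I do not expect a genuine obstacle here, as the result is essentially a one-line consequence of the total variation identity once scrambling is correctly translated into the existence of a common positive column. The only points requiring care are verifying the identity (which genuinely uses the stochasticity of both rows) and noting that the shared positive column is guaranteed precisely by the non-orthogonality of the rows, so that the lower bound $\min A$ on the overlap mass is exactly the feature the scrambling hypothesis buys us.
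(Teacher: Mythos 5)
Your proof is correct. Note that the paper does not actually prove this lemma: it states it as a well-known fact and cites Eq.~(25) of the reference on reaching consensus, so there is no in-paper argument to compare against. Your argument is the standard one and is complete: the identity $\tfrac12\sum_k|u_k-v_k| = 1-\sum_k\min(u_k,v_k)$ for two probability vectors is verified exactly as you say (each $|u_k-v_k| = u_k+v_k-2\min(u_k,v_k)$, then sum and use that both rows sum to $1$), and the scrambling hypothesis supplies a column $k_0$ where both rows have nonzero entries, each of which is at least $\min A$ under the paper's convention that $\min A$ denotes the smallest \emph{nonzero} entry. Dropping the remaining nonnegative overlap terms and maximizing over row pairs gives $\mu(A)\le 1-\min A$. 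This is a clean, self-contained justification of a step the paper outsources to the literature.
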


Let $\gamma$ be a finite spanning sequence of edges of $G$. 
Then, by~\eqref{eq:suppinclusion}, the graph of $P_\gamma$ is strongly connected with self-arcs (more precisely, the graph contains a bi-directional spanning tree). 
It then follows that $P_\gamma$ is irreducible~\cite[Theorem 6.2.24]{horn1}.   
We also need the following lemma: 

\vspace{.15in}

\begin{Lemma}\label{lm:neighborshared}
The product of any set of $\ell\ge \lfloor \frac{n}{2} \rfloor$
irreducible $n\times n$ stochastic matrices with positive diagonal entries is a scrambling matrix.
\end{Lemma}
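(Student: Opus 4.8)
The plan is to translate the scrambling condition into a statement about the supports of the rows of the product, and then to show that after $\lfloor n/2\rfloor$ steps every row support is so large that any two of them must intersect. Write $P = B_\ell B_{\ell-1}\cdots B_1$ for the product of the given irreducible stochastic matrices, each with positive diagonal, and for a fixed index $i$ let $R_i := \{k : P_{ik}\neq 0\}$ be the support of the $i$th row. Two rows $i$ and $j$ are orthogonal precisely when $R_i\cap R_j=\emptyset$, so $P$ is scrambling if and only if $R_i\cap R_j\neq\emptyset$ for every pair $i,j$. The whole argument therefore reduces to a uniform lower bound on $|R_i|$.

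First I would set up a reach-set recursion that tracks which initial indices influence row $i$. Reading $P_{ik}>0$ as the existence of a time-respecting walk from $k$ (entering $B_1$) to $i$ (leaving $B_\ell$), I define $D_0:=\{i\}$ and, for $r=1,\ldots,\ell$, let $D_r$ be obtained from $D_{r-1}$ by adjoining every index having a graph edge into $D_{r-1}$ in the matrix $B_{\ell-r+1}$; then $R_i=D_\ell$. Because each $B_t$ has positive diagonal, the inclusion $D_{r-1}\subseteq D_r$ holds, exactly the support monotonicity of Eq.~\eqref{eq:suppinclusion}. The key growth step is: whenever $D_{r-1}\subsetneq V$, one has $D_r\supsetneq D_{r-1}$. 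This is where irreducibility enters, since the graph of $B_{\ell-r+1}$ is strongly connected, any proper nonempty set $D_{r-1}$ must receive at least one edge from its complement, and that edge contributes a new index to $D_r$. Iterating from $|D_0|=1$ gives $|R_i|\geq\min(\ell+1,n)$ for every $i$.

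With $\ell\geq\lfloor n/2\rfloor$ this yields $|R_i|\geq\lfloor n/2\rfloor+1$ for every $i$ (here $n\geq 2$; the case $n=1$ is trivial). Then for any two indices $i,j$ we have $|R_i|+|R_j|\geq 2\lfloor n/2\rfloor+2\geq n+1>n$, and since $R_i,R_j\subseteq\{1,\ldots,n\}$, inclusion--exclusion forces $|R_i\cap R_j|\geq |R_i|+|R_j|-n\geq 1$. Hence no two rows of $P$ are orthogonal, so $P$ is scrambling.

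The routine parts are the support monotonicity and the final counting. The crux, and the step I would write most carefully, is the growth claim for $D_r$. Two points need attention there: choosing the \emph{backward} influence sets rather than the forward reach sets (the analogous bound on column supports is true but does \emph{not} imply scrambling, so the direction genuinely matters), and the precise crossing-edge fact for strongly connected digraphs, namely that there is always an edge entering any proper nonempty vertex subset. Making the floor arithmetic land exactly at the threshold $\lfloor n/2\rfloor$ is the only other place demanding care.
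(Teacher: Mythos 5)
Your proof is correct. It follows the same graphical route as the paper but is self-contained where the paper proceeds by citation: the paper identifies the graph of the product with the composition of the factors' graphs, then invokes two external facts, namely that the composition of $\ell\ge\lfloor n/2\rfloor$ strongly connected graphs with self-arcs is neighbor-shared (Prop.~9 of the cited reference) and that a stochastic matrix is scrambling if and only if its graph is neighbor-shared. Your backward influence sets $D_r$ and the pigeonhole step $|R_i|+|R_j|>n$ are precisely a proof of those two facts: $R_i\cap R_j\neq\emptyset$ is exactly the statement that $i$ and $j$ share a common in-neighbor in the composed graph, and the growth claim (an edge must enter any proper nonempty subset of a strongly connected digraph, and self-loops preserve what has been accumulated) is the engine behind the cited proposition. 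You are also right that the direction matters --- the row supports, i.e.\ the in-neighbor sets of the composition, are the relevant objects, not the column supports --- and your floor arithmetic at the threshold is exact ($2\lfloor n/2\rfloor+2\ge n+1$ in both parities). What your version buys is independence from the two cited references at the cost of a page of elementary bookkeeping; what the paper's version buys is brevity and a reusable notion (neighbor-sharedness) that it leans on nowhere else.
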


\begin{proof}
We will use a graphical approach. 
We call a digraph {\em neighbor-shared} if any two distinct nodes share a common in-neighbor.

Let $G_p$ and $G_q$ be two directed graphs with the same node set $V$. The composition  of  $G_p $ with $G_q$,  denoted by $G_q\circ G_p$, is a digraph with node set $V$ and edge set defined as follows:  $v_i v_j$ is an edge of $G_q\circ G_p$ whenever there is a node $v_k$
such that $v_iv_k$ is an edge of $G_{p}$ and $v_kv_j$ is an edge  of $G_{q}$. Since
composition is an associative binary operation, it extends unambiguously to any finite sequence of digraphs with the same node set.  
Let $M_1$ and $M_2$ be two nonnegative $n\times n$ matrices, and $G_1$, $G_2$ be their respective graphs. Then,  
by construction, the graph of $M_2M_1$ is $G_2\circ G_1$. 

From~\cite[Prop.~9]{reachingp1}, we have that the composition of any set of $\ell\ge \lfloor \frac{n}{2} \rfloor$ strongly connected graphs with self-arcs with the same node set is neighbor-shared. It has been shown in~\cite{reachingp2} that  a stochastic matrix is scrambling if and only if its graph is neighbor-shared. This concludes the proof.
\end{proof}

With the preliminaries above, we will now prove Theorems~\ref{th:main1} and~\ref{th:defp}. 

\begin{proof}[Proof of Theorems~\ref{th:main1} and~\ref{th:defp}]
We first assume that the $A_{ij}$ are holonomic for $G$ and prove the two theorems. 
Let $\gamma$ be an infinite spanning sequence. 
Since $\|P_{\gamma(t:0)}\|_S$ is monotonically non-increasing by~\eqref{eq:contraction}, the limit exists as $t$ goes to $\infty$. 
We show below that the limit is $0$. 
Let $0 =: t_0 < t_1 < t_2 \cdots $ be a monotonically increasing sequence such that every string $\gamma(t_{k+1}:t_{k})$, for $k \ge 0$, has $\lfloor \frac{n}{2} \rfloor$ disjoint spanning sub-strings. 
From Lemma~\ref{lm:neighborshared}, every product $P_{\gamma(t_{k+1}:t_{k})}$, for $k\ge 0$, is a scrambling matrix. 
By Proposition~\ref{prop:mindelta} and Lemma~\ref{lem:scrambling}, $\mu(P_\gamma(t_{k+1},t_k)) < (1 - \epsilon)$. It follows from the inequality~\eqref{eq:contraction} and $\|I\|_S=1$ that
\begin{align*}
    \lim_{k\to \infty} \|P_{\gamma(t_k:0)} \|_S 
    &\le \lim_{k\to\infty} (1 - \epsilon) \|P_{\gamma(t_{k-1}:0)} \|_S \\
    &\le \lim_{k\to\infty} (1 - \epsilon)^k = 0,  
\end{align*}
which implies that $\lim_{t\to \infty} \|P_{\gamma(t:0)}\|_S = 0$. 
It is known~\cite{chatterjee1977towards} that the semi-norm $\|P_{\gamma(t:0)}\|_S$ converges to $0$ if and only if $P_\gamma$ converges to a rank-one matrix. This establishes asymptotic convergence. 

If $\gamma$ is, furthermore, $m$-spanning, then the sequence $\{t_k\}_{k\ge 0}$ can be chosen such that $t_{k+1} - t_k \le m \lfloor \frac{n}{2}\rfloor =: T$. Let $t$ be an arbitrary time index and choose $k$ with $t_{k} \le t < t_{k+1}$. Then, 
\begin{align*}
    \|P_{\gamma(t:0)} \|_S &\le \|P_{\gamma(t_{k}:0)}\|_S \le (1 -\epsilon)^{k} = (1 - \epsilon)^{\lfloor \frac{t}{T} \rfloor} \\
    &\le (1 - \epsilon)^{\frac{t}{T} - 1},
\end{align*}
which establishes exponential convergence and Eq.~\eqref{eq:exponentialconvergence} in Theorem~\ref{th:main1}. 

To show that the vector $p$ is the one given in Algorithm 1, we first note that from Proposition~\ref{prop:p}, $p^\top A_{ij}= p^\top$ for all $(v_i, v_j)\in E$. Thus, $p^\top P_{\gamma(t:0)} = p^\top$ for all $t \ge 1$. Because $P_{\gamma(t:0)}$ converges to a rank one matrix as $t \to \infty$, it must converge to $\bfo p^\top$.

Finally, we assume that the $A_{ij}$ are not holonomic and show that there does not exist a probability vector $p$ such that $P_\gamma = \bfo p^\top$ for any infinite spanning sequence $\gamma$. Under the assumption on $A_{ij}$, owing to Proposition~\ref{prop:distinctpnonholo},  there exist at least two distinct spanning trees $G'$ and $G''$ of $G$ for which the associated probability vectors $p'$ and $p''$ are distinct. 
Let $\gamma'$ and $\gamma''$ be two infinite spanning sequences for $G$,  with the property that edges in $\gamma'$ (resp. $\gamma''$) belong to $G'$ (resp. $G''$).  
Because $G'$ and $G''$ are trees, the associated $(A_{ij})_{(v_i,v_j)\in E'}$ and $(A_{ij})_{(v_i,v_j)\in E''}$ are holonomic for $G'$ and $G''$, respectively. Thus, by the above arguments $P_{\gamma'} = \bfo p'$ and $P_{\gamma''} = \bfo p''$. Since $p'\neq p''$,  $P_{\gamma'} \neq P_{\gamma''}$. 
This completes the proof. 
\end{proof}

\subsection{Proof of Theorem~\ref{th:mainsurjective}}
Recall that  a local stochastic matrix $A_{ij}$ assigned to an undirected edge $(v_i,v_j)\in E$ gives rise to two ratios $r_{ij}=\frac{a_{ij}}{a_{ji}}$ and $r_{ji} = \frac{a_{ji}}{a_{ij}}$, which are inverse of each other, as defined in Section~\ref{ssec:lsmholo}. The set of all such ratios is thus the $|E|$-dimensional {\em subset} of $\R^{2|E|}$ defined as follows: 
$$ 
 \cY := \left \{(y_{ij})_{v_iv_j\in\vec E} \in \R_+^{2|E|} \mid y_{ij} y_{ji} = 1 \quad\forall v_iv_j \in \vec E \right \}.
$$
It is easy to see that $\cY$ is diffeomorphic to $\R^{|E|}_+$.

Also, recall that $\cS_G$ is the set of all $|E|$-tuples of local stochastic matrices for $G$.  
We now introduce the map $\phi: \cS_G  \to \cY$ defined as follows: 
\begin{equation}\label{eq:defphi}
\phi:\cS_G  \to \cY: (A_{ij})_{(v_i,v_j)\in E} \mapsto  \left(\frac{a_{ij}}{a_{ji}}\right)_{v_iv_j\in \vec E}. 
\end{equation}
Moreover, we have the following result:

\vspace{.15in}

\begin{Proposition}\label{prop:maphisujective}
The map $\phi$ defined in~\eqref{eq:defphi} is surjective and for any $y\in \cY$, the pre-image $\phi^{-1}(y)$ is an $|E|$-dimensional open box embedded in $\R^{2|E|}$.  
\end{Proposition}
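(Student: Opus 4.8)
The plan is to exploit the fact that the domain $\cS_G$, the codomain $\cY$, and the map $\phi$ all factor as products indexed by the edges of $G$. Concretely, I would identify $\cS_G$ with the open cube $\prod_{(v_i,v_j)\in E}(0,1)^2$, where the two coordinates attached to the edge $(v_i,v_j)$ (with $i<j$) are $a_{ij}$ and $a_{ji}$, and identify $\cY$ with $\prod_{(v_i,v_j)\in E}\R_+$ via the coordinate $y_{ij}$ (the companion coordinate $y_{ji}=y_{ij}^{-1}$ being determined by the defining constraint $y_{ij}y_{ji}=1$ of $\cY$). Under these identifications, $\phi$ is the product of the single-edge maps $\phi_e\colon (0,1)^2\to\R_+$, $(a,b)\mapsto a/b$, where $a=a_{ij}$ and $b=a_{ji}$. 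It therefore suffices to analyze $\phi_e$ and then take products.

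For the per-edge analysis, fix $r>0$ and describe the fiber $\phi_e^{-1}(r)=\{(a,b)\in(0,1)^2 : a=rb\}$. This is the intersection of the open square $(0,1)^2$ with the line through the origin of slope $r$ in the $(b,a)$-plane; the constraints $0<b<1$ and $0<rb<1$ cut out the nonempty open interval $0<b<\min(1,1/r)$, so the fiber is the open bounded line segment $\{(rb,b):0<b<\min(1,1/r)\}$, which is parametrized as $t\mapsto t(r,1)$ and hence has direction vector $(r,1)$. Nonemptiness of this interval for every $r>0$ yields surjectivity of $\phi_e$ (pick any interior $b$ and set $a=rb$), and the fiber is visibly a $1$-dimensional open box.

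Finally, I would assemble the global statement from the edgewise one. Surjectivity of $\phi$ is immediate from surjectivity of each $\phi_e$. For the fiber, because the defining conditions decouple across edges, $\phi^{-1}(y)=\prod_{e\in E}\phi_e^{-1}(y_e)$, a Cartesian product of $|E|$ open bounded line segments. Each such segment lies in the two-dimensional coordinate plane spanned by $a_{ij}$ and $a_{ji}$ and has direction $(y_{ij},1)$ supported on exactly those two coordinates; since distinct edges occupy disjoint pairs of coordinates, these direction vectors are linearly independent in $\R^{2|E|}$. By the definition of an open box recalled in the preliminaries, $\phi^{-1}(y)$ is therefore an $|E|$-dimensional open box embedded in $\R^{2|E|}$.

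I expect the only real care to be in the last step, namely matching the Cartesian product of orthogonally placed segments to the paper's definition of an open box: one must check linear independence of the defining segments and their boundedness, and verify that the product of segments lying in complementary coordinate subspaces is genuinely a box (with base point $\sum_e c_e$ and edges along the directions $(y_{ij},1)$) rather than merely a box-shaped set. The single-edge fiber computation and the surjectivity argument are routine.
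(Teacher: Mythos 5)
Your proposal is correct and follows essentially the same route as the paper: both factor $\phi$ as a Cartesian product of per-edge maps $(a_{ij},a_{ji})\mapsto a_{ij}/a_{ji}$, compute each fiber explicitly as a nonempty open bounded line segment (your parametrization $0<b<\min(1,1/r)$ is exactly the paper's two-case formula), and assemble the product into an $|E|$-dimensional open box. Your added check that the $|E|$ segments have linearly independent directions because they occupy disjoint coordinate pairs is a small point the paper leaves implicit, but it is not a different argument.
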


\begin{proof}
The map $\phi$ can be realized as a {\em Cartesian product} of maps $\phi_{ij}: (0,1)\times(0,1) \to \R^2_+$, for $(v_i,v_j)\in E$ with $i < j$, where each $\phi_{ij}$ is defined by sending the matrix $A_{ij}$ to a pair of reciprocal ratios  $(a_{ij}/a_{ji}, a_{ji}/a_{ij})$, i.e., we have that
\begin{align*} 
\phi\left ((A_{ij})_{(v_i,v_j)\in E} \right )  & = \prod_{(v_i,v_j)\in E} \phi_{ij}(A_{ij}) \\
 &=\Big((a_{ij}/a_{ji}, a_{ji}/a_{ij})\Big)_{(v_i,v_j) \in E}.
\end{align*}
Thus, taking inverses, we obtain that
$$\phi^{-1}\left ((a_{ij}/a_{ji})_{v_iv_j\in \vec E} \right ) = \prod_{(v_i,v_j)\in E} \phi_{ij}^{-1}(a_{ij}/a_{ji}, a_{ji}/a_{ij}).$$
Now, let $(r_{ij})_{v_iv_j \in \vec E}$, with $r_{ij} > 0$ and $r_{ij}r_{ji} = 1$, be an arbitrary point in the codomain of $\phi$. 
We claim that  $\phi_{ij}^{-1}(r_{ij},r^{-1}_{ij})$ is nonempty and, moreover, it is an open bounded segment in $\R^2$. If the claim holds,  then the proof is complete: Indeed, 
if $\phi_{ij}^{-1}(r_{ij},r^{-1}_{ij})$ is nonempty, then $\phi_{ij}$ is surjective. Owing to the Cartesian product structure exhibited above, $\phi$ is also surjective. By the same arguments, if $\phi_{ij}^{-1}(r_{ij},r^{-1}_{ij})$ is an open bounded segment, then $\phi^{-1}((r_{ij})_{v_iv_j\in \vec E})$ is an open box.

We will now establish the claim stated above. For ease of presentation, we will represent the matrix $A_{ij}$ by the pair of entries $(a_{ij}, a_{ji})$ (recall that all the other entries of $A_{ij}$ are uniquely determined by this pair). This representation can be viewed as a bijective linear map. 
With this representation, 
it follows from computation that
$$
\phi^{-1}_{ij}(r_{ij},r^{-1}_{ij}) = 
\begin{cases}
 \{ (r_{ij} x, x  ) \mid 0 < x < 1\}   &  \mbox{if $r_{ij}\le 1$}, \\
\{ (x, r^{-1}_{ij} x ) \mid 0 < x < 1\}    &  \mbox{if $r_{ij} > 1$}.
\end{cases}
$$
Thus, the preimage is an open segment parameterized by $x\in (0,1)$ as is claimed.  
\end{proof}

The map $\phi$ relates the local stochastic matrices to the ratios $r_{ij}$, for $v_iv_j \in \vec E$. We next construct a map that relates these ratios to the probability vector $p$. 
To this end, let $\theta: \rint \Delta^{n-1} \to \R^{2|E|}_+$ defined as follows:
\begin{equation}\label{eq:deftheta}
    \theta: p  = [p_1 \;\cdots\; p_n]^\top \mapsto (p_j/p_i)_{v_iv_j \in \vec E}~.
\end{equation}

We will show that the map $\theta$ is one-to-one, and thus admits a well-defined inverse. To this end, we describe the image of $\theta$ explicitly, as an algebraic subset of $\R^{2|E|}$. 
For a given positive vector $y=(y_{ij})_{v_iv_j\in \vec E} \in \cY$ and for a given walk $w = v_1\cdots v_k$ in $\vec G$, we let $Y_w := \prod^{k-1}_{\ell = 1} y_{\ell,\ell + 1}$.  Define a subset of $\cY$ as follows:
\begin{equation}\label{eq:defY}
\mathcal{Y}_{\cH} := \left \{y\in \cY \mid Y_w = 1 \mbox{ for every closed walk $w$ of $\vec G$} \right \}.
\end{equation}
Note that if $\cA\in \cS_G$ is holonomic for $G$, then the corresponding vector of ratios $r = (r_{ij})_{v_iv_j\in \vec E}$ belongs to the set $\cY_{\cH}$ by construction.

We have the following result:

\vspace{.15in}

\begin{Proposition}\label{prop:onetooneontoY}
The map $\theta$ is one-to-one and onto $\cY_{\cH}$.    
\end{Proposition}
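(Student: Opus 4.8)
The plan is to prove the two claims---injectivity and that the image is exactly $\cY_\cH$---separately, exploiting the fact that $\theta$ records \emph{ratios} of consecutive entries of $p$ along directed edges, which is formally the same structure as the map $R_w$ studied in Section~\ref{ssec:holoalgo}. First I would establish that $\theta(p) \in \cY_\cH$ for every $p\in\rint\Delta^{n-1}$, i.e.\ that the image lands in $\cY_\cH$. Writing $y = \theta(p)$, so $y_{ij} = p_j/p_i$, the product along any walk $w = v_{i_1}\cdots v_{i_k}$ telescopes: $Y_w = \prod_{\ell=1}^{k-1} p_{i_{\ell+1}}/p_{i_\ell} = p_{i_k}/p_{i_1}$. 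For a closed walk the starting and ending nodes coincide, so $Y_w = 1$; this shows $\theta(p)\in\cY_\cH$ directly and requires no holonomy hypothesis, only that every $p_i>0$ (guaranteed on $\rint\Delta^{n-1}$, so division is legitimate).

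Next I would prove injectivity. Suppose $\theta(p) = \theta(p')$. Fix a base node, say $v_1$, and for each $i$ choose a walk $w$ from $v_1$ to $v_i$ in $\vec G$ (possible since $G$ is connected). By the telescoping identity above, $p_i/p_1 = Y_w = p_i'/p_1'$ for both vectors, so $p$ and $p'$ are proportional. Since both are probability vectors, their entries sum to one, forcing the proportionality constant to be $1$ and hence $p = p'$. This argument is essentially the same normalization reasoning used in Proposition~\ref{prop:p} and Algorithm~1.

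The main work---and the step I expect to be the principal obstacle---is surjectivity onto $\cY_\cH$, namely showing that every $y\in\cY_\cH$ arises as $\theta(p)$ for some $p\in\rint\Delta^{n-1}$. The natural approach mirrors Algorithm~1: given $y\in\cY_\cH$, set $q_1:=1$ and, for each $i$, choose a walk $w_i$ from $v_1$ to $v_i$ and define $q_i := Y_{w_i}$, then normalize $p := q/\sum_k q_k$. The delicate point is \emph{well-definedness}: $q_i$ must not depend on the chosen walk $w_i$. This is exactly where the defining constraint $Y_w=1$ for closed walks is used---if $w$ and $w'$ are two walks from $v_1$ to $v_i$, then $w(w')^{-1}$ is a closed walk, and since $Y_{w^{-1}} = Y_w^{-1}$ (because $y_{ij}y_{ji}=1$), the condition $Y_{w(w')^{-1}}=1$ yields $Y_w = Y_{w'}$. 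This is the analogue of Corollary~\ref{cor:corpropclosedwalk}, transported from the $r_{ij}$ setting to the abstract $y_{ij}$ setting. Once $q$ is well-defined, all its entries are positive (products of positive numbers), so $p\in\rint\Delta^{n-1}$.

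Finally I would verify that this $p$ actually satisfies $\theta(p)=y$. For any directed edge $v_iv_j\in\vec E$, I compute $p_j/p_i = q_j/q_i = Y_{w_j}/Y_{w_i}$; taking $w_j$ to be $w_i$ followed by the edge $v_iv_j$ (and using well-definedness to justify this choice) gives $Y_{w_j} = Y_{w_i}\,y_{ij}$, whence $p_j/p_i = y_{ij}$ as required. The only genuine subtlety throughout is managing the inverse-edge relation $y_{ij}y_{ji}=1$ carefully so that closed-walk triviality correctly implies path-independence; everything else is telescoping and normalization. I would therefore present the proof as: (1) telescoping shows the image lies in $\cY_\cH$ and gives injectivity via normalization, and (2) the Algorithm~1 construction, with well-definedness secured by the $\cY_\cH$ constraint, supplies the preimage, establishing surjectivity.
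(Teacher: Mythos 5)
Your proposal is correct and follows essentially the same route as the paper: telescoping of the edge ratios to show the image lies in $\cY_{\cH}$, proportionality plus the simplex normalization for injectivity, and the Algorithm~1 construction (with well-definedness secured by the closed-walk condition) for surjectivity. If anything, your injectivity argument is slightly more careful than the paper's, since you explicitly propagate equality of ratios along walks from a base node rather than invoking a ratio $p_j/p_i$ for a pair $(i,j)$ that need not be an edge.
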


\begin{proof}
First,  if $y = \theta(p)$ for some $p\in \rint \Delta^{n-1}$, it follows from~\eqref{eq:deftheta} that $Y_w = 1$ for any closed walk $w = v_1\cdots v_k v_1$, so the image of $\theta$ is contained in $\cY_{\cH}$. 

Next, we show that the map $\theta$ is one-to-one. Let $p$ and $p'$ be two distinct vectors in $\rint \Delta^{n-1}$. Then, there exists at least a pair of distinct indices $(i,j)$ such that $p_j/p_i \neq p'_j / p'_i$. Indeed, if no such pair exists, then $p'$ is proportional to $p$ which, since  both $p$ and $p'$ belong to $\Delta^{n-1}$,  contradicts the fact that they are distinct. This shows that $\theta$ is one-to-one.

Finally, we show that 
for any $y\in \cY_{\cH}$, there exists a $p\in \rint \Delta^{n-1}$ such that $\theta(p) = y$. 
One can obtain such a vector $p$ by using Algorithm 1, but with $r_{ij}$ and $R_w$ replaced by $y_{ij}$ and $Y_w$, respectively. The choice of the base node and the choices of walks from the base node to the other nodes do not matter since $Y_w = 1$ for all closed walks $w$---the same arguments used in Propositions~\ref{prop:1forclosedwalk} and~\ref{prop:pindepbase}, and Corollary~\ref{cor:corpropclosedwalk} can be applied to establish the fact. Then, by construction, the vector $p$ indeed satisfies $\theta(p) = y$. To see this, we
let $v_iv_j$ be an arbitrary edge in $\vec G$ and show that $p_j/p_i = y_{ij}$. 
Let $v_i$ be a base node chosen in Step 1 of Algorithm 1. Since $v_iv_j$ is an edge, by Step 2 of Algorithm 1, we have that $p_j = y_{ij} p_i$, i.e., $p_j/p_i = y_{ij}$.
\end{proof}

With the propositions above, we prove Theorem~\ref{th:mainsurjective}: 

\begin{proof}[Proof of Theorem~\ref{th:mainsurjective}]
By Proposition~\ref{prop:onetooneontoY}, the map $\theta$ is a bijection. Moreover, by  Definition~\ref{def:holonomicAij} of holonomic local stochastic matrices, 
$\cH_G = \phi^{-1}(\cY_{\cH})$. 
We can thus write the map $\pi: \cH_G \to \rint \Delta^{n-1}$ as $\pi(\cdot) = \theta^{-1}(\phi(\cdot))$ by restricting the domain of $\phi$ to the subset $\cH_G$. 
Thus, for a given $p\in \rint \Delta^{n-1}$, since
$\pi^{-1}(p) = \phi^{-1}(\theta(p))$ and since $\theta(p)\in \cY_\cH \subset \cY$, we conclude from Proposition~\ref{prop:maphisujective} that $\pi^{-1}(p)$ is an $|E|$-dimensional open box.
\end{proof}

\section{Conclusions}\label{end}
In this paper, we have investigated convergence of {\em weighted} gossip processes and characterized their limits. Mathematically, a weighted gossip process can be expressed as an infinite product of local stochastic matrices, which are not required to be doubly stochastic.
Using the notion of holonomy, 
we have provided a necessary and sufficient condition for the product to converge to a unique rank-one matrix, independent of the order of the appearance of the stochastic matrices in the product. We characterized explicitly both the limit and the sets of holonomic stochastic matrices that can give rise to a desired limit.    
Amongst the future directions in which the present work can be extended, we mention generalization of the results to local stochastic matrices with zeros in the $2\times 2$ principal submatrices. This case, though seemingly close to the one studied here, in fact exhibits a very different asymptotic behavior. We will also aim to generalize the results to vector-valued gossip processes, and to establish a unified framework that accommodate the results of the paper and the results of the previous work~\cite{BC2020triangulated}.

\bibliographystyle{unsrt}
\bibliography{references}




\end{document}